    \setlist{nosep}
\newcommand{\norm}[1]{\Vert #1 \Vert}
\newcommand{\RR}{\mathbb{R}}
\newcommand{\MM}{\mathcal{M}}
\newcommand{\ham}{\mathcal{H}}
\newcommand{\etabar}{\overline{\eta}}
\newcommand{\und}{,~}
\DeclareMathOperator{\Span}{span}
\newcommand{\inv}{^{-1}}
\newcommand{\tp}{^{\mathsf{T}}}
\DeclareMathOperator*{\ran}{ran}
\DeclareMathOperator*{\rank}{rank}
\newcommand{\dt}{\,\mathrm{d}t}
\newcommand{\ddt}{\tfrac{\mathrm{d}}{\mathrm{d}t}}
\newcommand{\dx}{\,\mathrm{d}x}
\newcommand{\ds}{\,\mathrm{d}s}
\newcommand{\pH}{pH\xspace}
\newcommand{\portHamiltonian}{port-Ha\-mil\-to\-ni\-an\xspace}
\renewcommand{\vec}[1]{\begin{bmatrix} #1 \end{bmatrix}}
\newcommand{\tvec}[1]{\left[\begin{smallmatrix} #1 \end{smallmatrix}\right]}
\newcommand{\Bigtextvec}[1]{\Big[\begin{smallmatrix} #1 \end{smallmatrix}\Big]}
\renewcommand{\implies}{\!\!$\Rightarrow$}
\newcommand{\sym}{_{\mathsf{H}}}
\renewcommand{\skew}{_{\mathsf{S}}}
\newcommand{\bfpj}{\mathbf{p}}
\newcommand{\bfTj}{\mathbf{T}}
\newcommand{\pj}{p}
\newcommand{\Tj}{T}
\newcommand{\subs}{\mathcal{W}}
\newcommand{\proj}{\mathcal{P}}
\newcommand{\cproj}{\mathcal{P}_{\!c}}
\newtheorem{theorem}{Theorem}
\newtheorem{definition}[theorem]{Definition}
\newtheorem{example}[theorem]{Example}
\newtheorem{lemma}[theorem]{Lemma}
\newtheorem{proposition}[theorem]{Proposition}
\newtheorem{remark}[theorem]{Remark}
\newtheorem{assumption}[theorem]{Assumption}
\begin{document}

\title{Passivity encoding representations of nonlinear systems}
\author{A. Karsai, T. Breiten, J. Ramme, P. Schulze
\thanks{
    This work was supported in part by the Deutsche Forschungsgemeinschaft within the subproject B03 in the Sonderforschungsbereich/Transregio 154 “Mathematical Modelling, Simulation and Optimization using the Example of Gas Networks” (Project 239904186).
}
\thanks{
    A. Karsai (email: karsai@math.tu-berlin.de), T. Breiten (email: tobias.breiten@tu-berlin.de), J. Ramme (email: ramme@math.tu-berlin.de) and P. Schulze (email: pschulze@math.tu-berlin.de) are all with the Institute of Mathematics, Technische Universität Berlin, 10623 Berlin, Germany.
}
}

\maketitle

\begin{abstract}
    Passive systems are characterized by their inability to generate energy internally, providing a powerful tool for modeling physical phenomena.
    Additionally, algebraically encoding passivity in the system description can be advantageous.
    For this, \portHamiltonian systems are a prominent approach.
    Another possibility is writing the system in suitable coordinates.
    In this paper, we investigate the equivalence between passivity and the feasibility of passivity encoding representations, thereby elaborating upon existing results for \portHamiltonian systems.
    Based on our findings, we present a method to construct \portHamiltonian representations of a passive system if the dynamics and the Hamiltonian are known.
\end{abstract}

\section{Introduction}

Physical processes can often be modeled by means of differential equations.
Furthermore, many real-world phenomena have additional energy properties associated with the dynamics.
Such a property is \emph{dissipativity}, which was extensively studied by J.\,Willems in the seminal works~\cite{willems72-dissipative1,willems72-dissipative2}.
Dissipative systems are characterized by their inability to generate energy internally.
This behavior is modeled by a \emph{storage function} which can only increase according to a given \emph{supply rate}.
A supply rate with particular importance is the \emph{impedance supply}.
Systems that are dissipative with respect to this supply rate are called \emph{passive}, and the storage function is called \emph{Hamiltonian}.
Although Willems' definition extends to systems without state space representations, we focus on finite dimensional state space systems.

For linear systems, passivity can be characterized using the Kalman--Yakubovich--Popov (KYP) inequality, see, e.g.,~\cite{willems72-dissipative2}.
Under smoothness and controllability assumptions, the result was extended to nonlinear systems in~\cite{moylan74-implications}, see also \cite{brockett66-path,willems72-dissipative1}.
The results of~\cite{moylan74-implications} were subsequently extended to dissipative systems with quadratic supply rates in~\cite{hill80-dissipative}. 

For modeling purposes, encoding additional algebraic properties in the system dynamics can be advantageous.
A prominent approach are \portHamiltonian (\pH) systems~\cite{mehrmann23-control,vanderschaft14-port,vanderschaft17-l2gain}, which are motivated by the geometric viewpoint of Dirac structures~\cite{vanderschaft14-port} and explicitly include the gradient of the Hamiltonian in the dynamics.
Other model classes algebraically encoding passivity include the framework of \emph{monotone} \portHamiltonian systems~\cite{camlibel23-port,gernandt24-port} and the related approach~\cite{giesselmann24-energy}.

First steps towards understanding the relation between passivity and the \portHamiltonian structure date back to~\cite{willems72-dissipative2}, see~\cite{cherifi24-difference} for a recent overview on the linear case.
For the recently introduced structure from~\cite{giesselmann24-energy}, this relationship is still largely unexplored.
Another question is the construction of algebraic representations in either of the model classes for systems known to be passive with a given Hamiltonian.
In the linear case, it is well known how to construct \portHamiltonian representations, see, e.g.,~\cite{willems72-dissipative2,prajna02-lmi}.
For the nonlinear case, first ideas appeared in~\cite{mclachlan99-geometric}, where the possibility to express a passive system via a linear, state dependent action on the gradient of the Hamiltonian was investigated without focus on \portHamiltonian structure.
Later, the \pH structure was incorporated in~\cite{ortega02-interconnection, wang03-generalized}, where the authors used methods similar to~\cite{mclachlan99-geometric}. 
Unfortunately, the constructions from~\cite{ortega02-interconnection, wang03-generalized} exhibit notable drawbacks. 
Firstly, they fail to produce linear representations if the original dynamics are linear. 
Secondly, and more critically, the constructed operators may exhibit singularities whenever the gradient of the Hamiltonian vanishes.
While~\cite{mclachlan99-geometric,ortega02-interconnection, wang03-generalized} all propose potential remedies, this remains a difficult problem.

Our main contributions are as follows:
\begin{itemize}
\item
    In \Cref{thm:P-B}, we characterize the class considered by~\cite{giesselmann24-energy} and show that a sufficient condition for passive systems to admit a representation in the class is the injectivity of the gradient of the Hamiltonian.
    Further, in \Cref{thm:B-A,thm:A-B} we analyze the relationship of this class and \portHamiltonian systems.
\item
    In \Cref{thm:P-A,thm:P-A-fix}, we adapt the construction of~\cite{mclachlan99-geometric} to \portHamiltonian systems, in turn making the results more easily accessible, and propose an approach to overcome a difficulty in the construction.
    See also \Cref{rem:P-A-mclachlan} on the differences to~\cite{mclachlan99-geometric}.
\item
    In \Cref{sec:constructions}, we focus on \portHamiltonian systems and propose a framework to overcome difficulties of the previously discussed constructions.
    We state \Cref{thm:conservative-systems,thm:P-general}, which deal with the conservative and general cases respectively.
\end{itemize}

The paper is organized as follows.
In \Cref{sec:problem-setting} we state the considered model classes and clarify the problem setting.
In \Cref{sec:characterizations} we investigate conditions under which a passive system can be formulated in either of the structures and comment on their relationship. 
\Cref{sec:constructions} contains the previously mentioned framework to overcome difficulties of the constructions for \portHamiltonian systems.
In \Cref{sec:examples}, we illustrate our findings with several examples, including applications to both finite and infinite dimensional systems.
Finally, in \Cref{sec:conclusion} we present our conclusions and provide an outlook for future research.

\subsection*{Notation}
We denote the set of all $k$-times continuously differentiable functions from $U$ to $V$ by $C^k(U,V)$ and define $C(U,V) \coloneq C^0(U,V)$.
When the spaces $U$ and $V$ are clear from context, we abbreviate $C^k \coloneq C^k(U,V)$.
The Jacobian of a function~$f\colon \RR^n \to \RR^n$ at a point $z$ is denoted by $Df(z)$.
For a matrix $A \in \RR^{n,n}$, we denote the skew-symmetric part and symmetric part by $A\skew\coloneq \frac12 (A-A\tp)$ and $A\sym\coloneq \frac12 (A + A\tp)$, respectively, where $\cdot\tp$ denotes the transpose. 
The kernel and range of the matrix~$A$ are denoted by $\ker(A)$ and $\ran(A)$, respectively.
Further, we write $A \succeq 0$ (resp.~$A\succ 0$) if $z\tp A z \geq 0$ (resp.~$z\tp A z > 0$) for all $z \in \RR^n\setminus\{0\}$.

\section{Problem setting}
\label{sec:problem-setting}
We consider systems of the form
\begin{equation}\label{eq:model}
    \begin{aligned}
        \ddt{z}(t) & = f(z(t)) + g(z(t)) u(t) \\
        y(t) & = h(z(t)),
    \end{aligned}
\end{equation}
where we do not incorporate feedthrough terms for simplicity.
Here, at time $t \in \RR$, the vector $z(t)\in\RR^n$ is the state of the system, $u(t)\in\RR^m$ is a control input, and $y(t)\in\RR^m$ is a passive output not necessarily corresponding to real world measurements.
In what follows, we suppress the time dependency and write $z$, $u$ and $y$ instead of $z(t)$, $u(t)$ and $y(t)$.
Further, we abbreviate $\dot{z} = \ddt z$.
For smooth storage functions, the term passivity~\cite{willems72-dissipative1} refers to the fact that there exists a
Hamiltonian $\ham\colon \RR^n \to \RR$ which satisfies $\ham \geq 0$ and 
\begin{equation}\label{eq:energybalance}
    \ham(z(t_1)) - \ham(z(t_0)) \leq \int_{t_0}^{t_1} y\tp u \dt
\end{equation}
for all $t_1 \geq t_0$ and all controls $u$ along solutions of~\eqref{eq:model}, where $y\tp$ denotes the transpose of $y$.
In other words, $\ham$ and $y\tp u$ are the storage function and the supply rate, respectively.
In the following, we restrict our attention to the case that $\ham$ is continuously differentiable, so that its gradient $\eta \coloneq \nabla \ham$ is well-defined and continuous.
It is shown in \cite{moylan74-implications} that under smoothness and controllability assumptions system~\eqref{eq:model} is passive with Hamiltonian $\ham \geq 0$ if and only if there exists $\ell\colon \RR^n \to \RR^p$ with 
\begin{equation}\label{eq:lure}
    \begin{aligned}
        \eta(z)\tp f(z) & = - \ell(z)\tp \ell(z) \\
        g(z)\tp \eta(z) & = h(z),
    \end{aligned}
\end{equation}
see also \cite{brockett66-path,willems72-dissipative1}.

In this paper we consider \pH systems of the form
\begin{equation}\label{eq:ph-a}
    \begin{aligned}
        \dot{z} & = (J(z) - R(z))\eta(z) + B(z) u \\
        y & = B(z)\tp \eta(z),
    \end{aligned}
    \tag{\text{a}}
\end{equation}
where $J, R \colon \RR^n \to \RR^{n,n}$ with $J(z) = -J(z)\tp$, $R(z) = R(z)\tp \succeq 0$ for all $z\in \RR^n$ and $B \colon \RR^n \to \RR^{n,m}$.
The class~\eqref{eq:ph-a} corresponds to the special case of linear resistive structures, and the generalization to nonlinear resistive structures replaces $R$ by $\mathcal{R}\colon \RR^n \times \RR^n \to \RR^n$ with $v\tp \mathcal{R}(z,v) \geq 0$ for all $z, v \in \RR^n$, see~\cite[Definition 6.1.4]{vanderschaft17-l2gain}.
As mentioned before, other approaches to model nonlinear actions on $\eta$ include monotone \portHamiltonian systems~\cite{camlibel23-port,gernandt24-port} and the framework of~\cite{giesselmann24-energy}.
The latter considers models of the form
\begin{equation*}
    \dot{z} = j(\eta(z)) - r(\eta(z)) + b(\cdot, \eta(z)),
\end{equation*}
where $j, r \colon \eta(\RR^n) \to \RR^n$ with $v\tp j(v) = 0$ and $v\tp r(v) \geq 0$ for all $v \in \eta(\RR^n)$, and the term $b(t,\eta(z(t)))$ models control inputs at time $t$.
In the following, we focus on the case in which the explicit time dependency in $b(\cdot,\eta(z))$ is solely attributable to an external control variable~$u$, and that $b(\cdot, \eta(z))$ is linear in~$u$.
In this case, we can write $b(\cdot,\eta(z)) = B(z) u$ for some $B\colon \RR^n \to \RR^{n,m}$, and we arrive at the model
\begin{equation}\label{eq:ph-b}
    \begin{aligned}
        \dot{z} & = j(\eta(z)) - r(\eta(z)) + B(z) u \\
        y & = B(z)\tp \eta(z).
    \end{aligned}
    \tag{\text{b}}
\end{equation}
If $r$ is the gradient of a convex function and $j(\eta(z))$ is replaced by $J \eta(z)$ with $J\in\RR^{n,n}$ being a skew-symmetric matrix, then we recover the class from~\cite[Equation 4.10]{camlibel23-port}.
We refer to \cite{camlibel23-port} for examples of systems naturally modeled in the form~\eqref{eq:ph-b} and to \Cref{rem:ph-b-and-nonlinear-resistive-structures} for comments on advantages of the model class~\eqref{eq:ph-b} over nonlinear resistive structures $\mathcal{R}$ as above.
In both of the models~\eqref{eq:ph-a} and~\eqref{eq:ph-b}, the structural assumptions on the operators $J$ and $R$ (resp.\,$j$ and $r$) and $\eta = \nabla \ham$ ensure that the system is passive with Hamiltonian $\ham$, see \Cref{prop:ph-is-passive}.
Additionally, the algebraic properties ensure that coupling of these systems is easily possible in a structure-preserving manner.

Before discussing the relationship between passivity and the classes~\eqref{eq:ph-a} and~\eqref{eq:ph-b}, some remarks on the uniqueness of the representations and the relevance of the class~\eqref{eq:ph-b} are in order.

\begin{remark}[uniqueness]\label{rem:uniqueness}
    In general, we do not have uniqueness in any of the previously mentioned representations.
    For the class~\eqref{eq:ph-a}, this fact is well known~\cite{mclachlan99-geometric} and will be exploited in~\Cref{sec:constructions}.
    For systems of the form~\eqref{eq:ph-b}, we can always choose $\tilde{j} \coloneq 0$ and $\tilde{r} \coloneq j - r$ such that $v\tp \tilde{j}(v) = 0$ and $v\tp \tilde{r}(v) \geq 0$ for all $v \in \eta(\RR^n)$.
    Similarly, for the \pH representation with nonlinear resistive structures, i.e.~$f(z) = J(z) \eta(z) - \mathcal{R}(z,\eta(z))$ with $v\tp \mathcal{R}(z,v) \geq 0$, we can always absorb conservative effects into $\mathcal{R}$.
    Nonetheless, it can be advantageous to include the functions~$j$ or $J$ in the formulation in order to emphasize the energy conservative parts of the dynamics.
\end{remark}

\begin{remark}\label{rem:ph-b-and-nonlinear-resistive-structures}
    Apart from being able to model nonlinear conservative actions on $\eta$, one benefit of modeling with~\eqref{eq:ph-b} rather than with nonlinear resistive structures as in~\cite[Definition 6.1.4]{vanderschaft17-l2gain} lies in the lack of redundancy: if $\eta$ is injective, then $\mathcal{R}(z,\eta(z))$ is uniquely determined by either of its inputs.
    Although this is no problem in symbolic calculations, it can be disadvantageous for numerical methods.
    To illustrate this, first notice that every passive system can be written in the form $f(z) = J(z) \eta(z) - \mathcal{R}(z,\eta(z))$ with $v\tp \mathcal{R}(z,v) \geq 0$ for all $z, v \in \RR^n$ by setting $J(z) \coloneq 0$ and
    \begin{equation*}
        \mathcal{R}(z,v)
        \coloneq 
        \begin{cases}
            - f(z) & \text{if $v = \eta(z)$,} \\
            0 & \text{else,}    
        \end{cases}
    \end{equation*}
    and that the above $\mathcal{R}$ can be approximated to arbitrary precision using $C^\infty$ functions without altering the corresponding dynamics.
    For a structured time discretization of the dynamics, discrete gradients~\cite{gonzalez96-time,mclachlan99-geometric} can be used.
    These are approximations $\etabar\colon \RR^n \times \RR^n \to \RR$ of $\eta = \nabla \ham$ satisfying 
    \begin{equation*}
        \etabar(z_1, z_2)\tp (z_2-z_1) = \ham(z_2) - \ham(z_1)
    \end{equation*}
    for all $z_1, z_2 \in \RR^n$.
    Similar to~\cite{breiten25-passive}, where the same idea was used for the structure~\eqref{eq:ph-b}, it is straightforward to show that the iterates $z_i$ defined by $z_0 = z(t_0)$ and
    \begin{equation*}
        \tfrac{z_{i+1} - z_{i}}{t_{i+1} - t_{i}} = -\mathcal{R}(\tfrac{z_{i} + z_{i+1}}{2}, \etabar(z_{i}, z_{i+1})) + B(\tfrac{z_{i} + z_{i+1}}{2}) u(\tfrac{t_{i} + t_{i+1}}{2})
    \end{equation*}
    satisfy an energy balance equation.
    However, unlike the scheme corresponding to~\eqref{eq:ph-b}, we cannot expect convergence of the iterates to the true solution in general because $\etabar(z_{i}, z_{i+1})$ is only an approximation to $\eta(\tfrac{z_{i} + z_{i+1}}{2})$ and, as outlined above,~$\mathcal{R}$ could be pathological.
\end{remark}

\section{Passivity encoding representations}
\label{sec:characterizations}

In order to facilitate the discussion, we assume that the second equation in~\eqref{eq:lure} is satisfied and focus on the system
\begin{equation}\label{eq:nonlinear-system}
    \begin{aligned}
    \dot{z} & = f(z) + B(z) u \\
    y & = B(z)\tp \eta(z).
    \end{aligned}
\end{equation}
Note that the Hamiltonian~$\ham$ and~$\eta = \nabla \ham$ are fixed in our discussion. 

\begin{definition}
    We define the three properties (P), (A) and (B) as follows.
    \begin{itemize}
    \item[(P)]
        The system~\eqref{eq:nonlinear-system} is passive with Hamiltonian $\ham$ such that equations~\eqref{eq:lure} hold.
    \item[(A)]
        The system~\eqref{eq:nonlinear-system} can be represented in the form~\eqref{eq:ph-a}.
        In other words, for all $z\in \RR^n$ we have $f(z) = (J(z) - R(z)) \eta(z)$, where $J(z) = -J(z)\tp$ and $R(z) = R(z)\tp \succeq 0$ for all $z \in \RR^n$.
    \item[(B)]
        The system~\eqref{eq:nonlinear-system} can be represented in the form~\eqref{eq:ph-b}.
        In other words, for all $z\in \RR^n$ we have $f(z) = j(\eta(z)) - r(\eta(z))$, where $v\tp j(v) = 0$ and $v\tp r(v) \geq 0$ for all $v \in \eta(\RR^n)$.
    \end{itemize}
\end{definition}

As we have mentioned in the introduction, systems of the form~\eqref{eq:ph-a} and~\eqref{eq:ph-b} are always passive.

\begin{proposition}[\!\!\cite{vanderschaft14-port,giesselmann24-energy}]\label{prop:ph-is-passive}
    Systems of the form~\eqref{eq:ph-a} or~\eqref{eq:ph-b} are passive, and trajectories~$z$ of~\eqref{eq:ph-a} or~\eqref{eq:ph-b} satisfy the {energy balance} 
    \begin{equation*}
        \ham(z(t_1)) - \ham(z(t_0)) = \int_{t_0}^{t_1} (-d(z) + y\tp u) \dt
    \end{equation*}
    for all $t_1 \geq t_0$, where $d(z) = \eta(z)\tp R(z) \eta(z) \geq 0$ or $d(z) = \eta(z)\tp r(\eta(z)) \geq 0$, respectively.
    In particular, any of the conditions (A) and (B) implies (P).
\end{proposition}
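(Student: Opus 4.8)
The plan is to differentiate the Hamiltonian along trajectories and read off the claimed balance from the structural assumptions. For a trajectory $z$ of either system, the chain rule gives $\ddt \ham(z(t)) = \eta(z)\tp \dot z$, since $\eta = \nabla\ham$; it then remains to insert the respective right-hand side and simplify. Throughout I use that $\ham \geq 0$ is part of the data (it is the storage function), so that once the energy balance is established, passivity amounts to the inequality~\eqref{eq:energybalance}.

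For the class~\eqref{eq:ph-a}, substituting $\dot z = (J(z)-R(z))\eta(z) + B(z)u$ produces three terms. The term $\eta(z)\tp J(z)\eta(z)$ vanishes because any scalar $v\tp M v$ with $M$ skew-symmetric equals its own negative; the term $\eta(z)\tp B(z) u$ equals $(B(z)\tp\eta(z))\tp u = y\tp u$ by the output equation; and the remaining term is $-\eta(z)\tp R(z)\eta(z) =: -d(z)$, which is nonpositive since $R(z)\succeq 0$. For the class~\eqref{eq:ph-b} the argument is identical: here $\eta(z)\tp j(\eta(z)) = 0$ follows from $v\tp j(v) = 0$ applied at $v = \eta(z)\in\eta(\RR^n)$, and $d(z) = \eta(z)\tp r(\eta(z))\geq 0$ by the corresponding sign condition. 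In both cases we arrive at $\ddt\ham(z(t)) = -d(z) + y\tp u$ with $d(z)\geq 0$, and integrating over $[t_0,t_1]$ yields the stated energy balance.

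To deduce passivity, i.e.\ the inequality~\eqref{eq:energybalance}, I would simply drop the nonnegative dissipation term $d(z)\geq 0$ in the energy balance. For the sharper statement that (A) and (B) each imply (P), I would additionally exhibit a function $\ell$ satisfying~\eqref{eq:lure}: the second equation holds by construction, since the output is $y = B(z)\tp\eta(z) = h(z)$, while the first equation reads $\eta(z)\tp f(z) = -d(z)$ in both cases. In case (A) one factors $R(z) = L(z)\tp L(z)$ (for instance via the symmetric square root $L(z) = R(z)^{1/2}$, which exists because $R(z)\succeq 0$) and sets $\ell(z) = L(z)\eta(z)$, so that $\ell(z)\tp\ell(z) = \eta(z)\tp R(z)\eta(z) = d(z)$. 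In case (B) one may simply take the scalar $\ell(z) = \sqrt{\eta(z)\tp r(\eta(z))}$.

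The computation is routine and no genuine obstacle arises; the only points requiring care are the vanishing of the skew-symmetric quadratic form and the observation that the conservativity identity $v\tp j(v)=0$ is invoked precisely at arguments in $\eta(\RR^n)$, which is exactly what the hypothesis on $j$ provides.
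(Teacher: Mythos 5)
Your proof is correct and follows exactly the standard argument that the paper delegates to the cited references: differentiate $\ham$ along trajectories, use skew-symmetry of $J$ (resp.\ $v\tp j(v)=0$), semidefiniteness of $R$ (resp.\ $v\tp r(v)\geq 0$), and the output equation, then exhibit $\ell$ via a factorization of the dissipation. Nothing is missing, and the extra care you take to produce $\ell$ satisfying~\eqref{eq:lure} (rather than only the integral inequality~\eqref{eq:energybalance}) is exactly what the paper's definition of property (P) requires.
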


To characterize the relationship between passivity and the representations~\eqref{eq:ph-a} and~\eqref{eq:ph-b}, the following consequence of Taylor's theorem will be useful, see, e.g.~\cite[Section 4.5]{zeidler91-applied}.

\begin{lemma}\label{thm:f-F}
    Let $f\colon \RR^n \to \RR^n$ satisfy $f(0) = 0$ and $f\in C^k$ for some $k\geq 1$.
    Then $f(z) = F(z) z$ for some $F\colon \RR^n \to \RR^{n,n} \in C^{k-1}$.
    One such~$F$ is 
    $
        F(z) = \int_{0}^{1} Df(sz) \,\mathrm{d}s.
    $
\end{lemma}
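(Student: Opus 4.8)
The plan is to apply the fundamental theorem of calculus along the ray from the origin to $z$, an argument sometimes attributed to Hadamard. I would fix $z \in \RR^n$ and introduce the auxiliary curve $\gamma(s) \coloneq f(sz)$ for $s \in [0,1]$. Since $f \in C^k$ with $k \geq 1$, this curve is continuously differentiable, and the chain rule gives $\gamma'(s) = Df(sz)\,z$. The hypothesis $f(0) = 0$ means $\gamma(0) = 0$, while $\gamma(1) = f(z)$, so the fundamental theorem of calculus yields
\begin{equation*}
    f(z) = \gamma(1) - \gamma(0) = \int_0^1 \gamma'(s) \ds = \int_0^1 Df(sz)\,z \ds.
\end{equation*}
Because $z$ is constant with respect to the scalar integration variable $s$, it factors out of the integral, and I obtain $f(z) = F(z)\,z$ with $F(z) = \int_0^1 Df(sz) \ds$, exactly the claimed representation.

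What remains is the regularity statement $F \in C^{k-1}$, and this is where I expect the only real work to lie. The point is to justify differentiating under the integral sign. Since $f \in C^k$, its Jacobian $Df$ is of class $C^{k-1}$, so the integrand $(s,z) \mapsto Df(sz)$ and all its partial derivatives in $z$ up to order $k-1$ are jointly continuous on $[0,1] \times \RR^n$. On any compact subset of $\RR^n$ these derivatives are therefore bounded uniformly in $s \in [0,1]$, which is precisely the condition that permits interchanging differentiation in $z$ with integration in $s$. Iterating this interchange $k-1$ times shows that $F$ possesses continuous partial derivatives up to order $k-1$, that is, $F \in C^{k-1}$.

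I expect the derivation of the formula itself to be entirely routine, following immediately from the chain rule and the fundamental theorem of calculus. The genuine obstacle, though a mild one, is the last step: the interchange of differentiation and integration, which hinges on the uniform local boundedness and continuity of the derivatives of $s \mapsto Df(sz)$ over the compact parameter interval $[0,1]$. Compactness of this interval is what makes the standard differentiation-under-the-integral theorems applicable and keeps the argument clean.
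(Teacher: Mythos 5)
Your proof is correct and follows exactly the standard route the paper has in mind: the lemma is stated there as a consequence of Taylor's theorem (with reference to Zeidler), and the integral form of the first-order Taylor remainder at base point $0$ is precisely your fundamental-theorem-of-calculus computation along $s \mapsto f(sz)$, with the same differentiation-under-the-integral argument giving $F \in C^{k-1}$.
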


\subsection{Towards (B)}\label{subsec:towards-B}
We will start by investigating the structure~\eqref{eq:ph-b}.
Under the assumption of injectivity of $\eta$, this structure turns out to be a representation of passive systems that explicitly incorporates energy features in the equations.

\begin{proposition}[(P) \implies (B)]\label{thm:P-B}
    Assume that (P) holds.
    Then the following are equivalent:
    \begin{enumerate}
        \item \label{item:P-B-propB}
            Property (B) holds.
        \item \label{item:P-B-eta-f}
            For all points $z_1, z_2 \in \RR^n$ with $\eta(z_1)=\eta(z_2)$ it holds that $f(z_1) = f(z_2)$.
        \item \label{item:P-B-m}
            There exists a map $m\colon \eta(\RR^n) \to \RR^n$ such that $f(z) = m(\eta(z))$ for all $z\in\RR^n$.
    \end{enumerate}
    Further, the following condition implies (B):
    \begin{itemize}
    \item
        The map $\eta$ is injective.
    \end{itemize}
\end{proposition}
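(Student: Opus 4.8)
The plan is to establish the three equivalences through the implications $\ref{item:P-B-propB} \Rightarrow \ref{item:P-B-m}$, $\ref{item:P-B-m} \Leftrightarrow \ref{item:P-B-eta-f}$ and $\ref{item:P-B-m} \Rightarrow \ref{item:P-B-propB}$, and then to obtain the injectivity statement as an immediate consequence of the characterization in item~\ref{item:P-B-eta-f}. Throughout, the only place where passivity is actually needed is the first equation in~\eqref{eq:lure}, which supplies the sign information $\eta(z)\tp f(z) = -\ell(z)\tp \ell(z) \le 0$ for all $z \in \RR^n$.

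First I would dispose of the purely set-theoretic equivalence $\ref{item:P-B-m} \Leftrightarrow \ref{item:P-B-eta-f}$, which just says that $f$ factors through $\eta$. If $f(z) = m(\eta(z))$, then $\eta(z_1) = \eta(z_2)$ forces $f(z_1) = m(\eta(z_1)) = m(\eta(z_2)) = f(z_2)$, giving item~\ref{item:P-B-eta-f}. Conversely, assuming item~\ref{item:P-B-eta-f}, I define $m$ on $\eta(\RR^n)$ by $m(v) \coloneq f(z)$ for an arbitrary $z$ with $\eta(z) = v$; the constancy of $f$ on the level sets of $\eta$ is exactly what makes this well defined, and by construction $f(z) = m(\eta(z))$. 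The implication $\ref{item:P-B-propB} \Rightarrow \ref{item:P-B-m}$ is then immediate: given a representation $f(z) = j(\eta(z)) - r(\eta(z))$, setting $m \coloneq j - r$ on $\eta(\RR^n)$ yields item~\ref{item:P-B-m}.

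The substantive direction is $\ref{item:P-B-m} \Rightarrow \ref{item:P-B-propB}$, and this is where passivity enters. Given $m$ with $f(z) = m(\eta(z))$, I evaluate the first Lur'e equation at any $z$ with $\eta(z) = v$ to obtain $v\tp m(v) = \eta(z)\tp f(z) = -\ell(z)\tp \ell(z) \le 0$ for every $v \in \eta(\RR^n)$. With this nonpositivity in hand the decomposition comes for free: following the construction in~\Cref{rem:uniqueness}, I simply put $j \coloneq 0$ and $r \coloneq -m$ on $\eta(\RR^n)$. Then $v\tp j(v) = 0$ trivially, while $v\tp r(v) = -v\tp m(v) \ge 0$ for all $v \in \eta(\RR^n)$, and $j(\eta(z)) - r(\eta(z)) = m(\eta(z)) = f(z)$, which exhibits the required form~\eqref{eq:ph-b}, so (B) holds.

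Finally, the injectivity claim is a corollary of item~\ref{item:P-B-eta-f}: if $\eta$ is injective, then $\eta(z_1) = \eta(z_2)$ implies $z_1 = z_2$ and hence $f(z_1) = f(z_2)$, so item~\ref{item:P-B-eta-f} holds and (B) follows. I do not expect a genuine obstacle here; the only points requiring care are the well-definedness of $m$ in $\ref{item:P-B-eta-f} \Rightarrow \ref{item:P-B-m}$ and keeping the domains of $j$, $r$ and $m$ restricted to the image $\eta(\RR^n)$, which is precisely the set on which the Lur'e sign condition is available.
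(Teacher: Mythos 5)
Your proposal is correct and follows essentially the same route as the paper: the equivalences are established by factoring $f$ through $\eta$ (with well-definedness of $m$ coming from constancy of $f$ on the level sets of $\eta$), the sign condition $v\tp m(v) \le 0$ is extracted from the first Lur'e equation, and the representation is obtained by taking $j \coloneq 0$ and $r \coloneq -m$, with injectivity handled as an immediate special case. The only difference is the cosmetic reordering of the implication cycle.
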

\begin{proof}
    To show \ref{item:P-B-propB} \implies \ref{item:P-B-eta-f}, note that by the definition of (B) there exist functions $j,r \colon \eta(\RR^n) \to \RR^n$ such that $f(z) = j(\eta(z)) - r(\eta(z))$ for all $z \in \RR^n$.
    Hence, for $z_1, z_2 \in \RR^n$ with $\eta(z_1) = \eta(z_2)$ it follows that $f(z_1) = j(\eta(z_1)) - r(\eta(z_1)) = j(\eta(z_2)) - r(\eta(z_2)) = f(z_2)$.

    For \ref{item:P-B-eta-f} \implies \ref{item:P-B-m}, let us define the equivalence relation $\sim$ by 
    \begin{equation*}
        z \sim y :\Leftrightarrow \eta(z) = \eta(y)
    \end{equation*}
    and denote the equivalence class of $z \in \RR^n$ by $[z] \in \RR^n/\!\sim \,$.
    Then the map $\hat{\eta}\colon \RR^n/\!\sim \,\to \eta(\RR^n),~ [z] \mapsto \eta(z)$ is bijective and we can for $v\in \eta(\RR^n)$ define 
    \begin{equation*}
        m(v) \coloneq f\big(\psi(\hat{\eta}\inv(v))\big),
    \end{equation*}
    where $\psi\colon \RR^n/\!\sim \,\to \RR^n,~ [z] \mapsto z$ picks an arbitrary representative.
    Note that condition \ref{item:P-B-eta-f} ensures that the map $m\colon \eta(\RR^n) \to \RR^n$ is well defined, and that by definition we have $m(\eta(z)) = f(z)$ for all $z\in\RR^n$.

    For \ref{item:P-B-m} \implies \ref{item:P-B-propB}, observe that by property (P) we have
    \begin{equation*}
        \eta(z)\tp m(\eta(z)) = \eta(z)\tp f(z) = - \ell(z)\tp \ell(z) \leq 0,
    \end{equation*}
    or in other words $v\tp m(v) \leq 0$ for all $v \in \eta(\RR^n)$.
    Hence, we obtain a representation of~\eqref{eq:nonlinear-system} in the form~\eqref{eq:ph-b} by setting $j\coloneq0$ and $r\coloneq-m$.

    To finish the proof, note that the sufficient condition implies~\ref{item:P-B-eta-f} and hence also property (B).
\end{proof}

Note that ``\ref{item:P-B-eta-f})$\Rightarrow$\ref{item:P-B-propB})'' from \Cref{thm:P-B} can be interpreted as a generalization of~\cite[Theorem 7.1]{vanderschaft14-port} to nonlinear systems.
Furthermore, notice that under the assumptions of the inverse function rule it follows that $m$ from \Cref{thm:P-B} is as smooth as $f$ and $\eta$.

\begin{remark}\label{rem:P-B-insight-injectivity}
    Provided that $\eta$ is injective, a representation of~\eqref{eq:nonlinear-system} in the form~\eqref{eq:ph-b} is easy to obtain by setting $r(v) \coloneq -f(\eta\inv(v))$ for all $v \in \eta(\RR^n)$ and $j\coloneq 0$.
    Here, $\eta\inv$ denotes the inverse of $\eta\colon \RR^n \to \eta(\RR^n)$.
    Then 
    \begin{equation*}
        \eta(z)\tp r(\eta(z)) = - \eta(z)\tp f(z) = \ell(z)\tp \ell(z) \geq 0,
    \end{equation*}
    so that the system~\eqref{eq:nonlinear-system} is in the structure~\eqref{eq:ph-b}.
\end{remark}

Similar strategies can be used to investigate the relationship between~\eqref{eq:ph-a} and~\eqref{eq:ph-b}.

\begin{proposition}[(A) \implies (B)]\label{thm:A-B}
    Assume that (A) holds.
    Then the following are equivalent:
    \begin{enumerate}
    \item \label{item:A-B-propB}
        Property (B) holds.
    \item \label{item:A-B-eta-f}
        For  $\eta(z_1)=\eta(z_2)$ we have $\eta(z_1),\eta(z_2) \in \ker(J(z_1) - J(z_2) - R(z_1) + R(z_2))$.
    \item \label{item:A-B-m}
        There exists a map $m\colon \eta(\RR^n) \to \RR^n$ such that $(J(z) - R(z))\eta(z) = m(\eta(z))$.
    \end{enumerate}
    Further, any of the following conditions implies (B):
    \begin{itemize}
    \item
        The map $\eta$ is injective.
    \item
        $J$ and $R$ are constant.
    \end{itemize}
\end{proposition}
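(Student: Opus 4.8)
The plan is to reduce the three equivalences to \Cref{thm:P-B}, exploiting that (A) supplies the explicit representation $f(z) = (J(z)-R(z))\eta(z)$ and, via \Cref{prop:ph-is-passive}, that (A) implies (P). Once (P) is in hand, \Cref{thm:P-B} guarantees that its conditions \ref{item:P-B-propB}, \ref{item:P-B-eta-f} and \ref{item:P-B-m} are mutually equivalent, so it suffices to identify each of \ref{item:A-B-propB}, \ref{item:A-B-eta-f}, \ref{item:A-B-m} with the corresponding condition there.

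First I would note that \ref{item:A-B-propB} is by definition condition \ref{item:P-B-propB}, and that \ref{item:A-B-m} coincides with \ref{item:P-B-m}: substituting $f(z) = (J(z)-R(z))\eta(z)$ turns the existence of $m$ with $(J(z)-R(z))\eta(z) = m(\eta(z))$ into the existence of $m$ with $f(z) = m(\eta(z))$. Hence \Cref{thm:P-B} immediately yields the equivalence of \ref{item:A-B-propB} and \ref{item:A-B-m}.

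Next I would match \ref{item:A-B-eta-f} with \ref{item:P-B-eta-f}. For $z_1, z_2$ with $\eta(z_1) = \eta(z_2) =: v$, the representation from (A) gives
\[
    f(z_1) - f(z_2) = (J(z_1) - R(z_1))v - (J(z_2) - R(z_2))v = \big(J(z_1) - J(z_2) - R(z_1) + R(z_2)\big)v.
\]
Therefore $f(z_1) = f(z_2)$ holds precisely when $v = \eta(z_1) = \eta(z_2)$ lies in $\ker\big(J(z_1) - J(z_2) - R(z_1) + R(z_2)\big)$, which is exactly the assertion of \ref{item:A-B-eta-f}. This identifies \ref{item:A-B-eta-f} with \ref{item:P-B-eta-f} and closes the chain of equivalences.

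Finally, the two sufficient conditions follow by checking \ref{item:A-B-eta-f}. If $\eta$ is injective, then $\eta(z_1) = \eta(z_2)$ forces $z_1 = z_2$, so the matrix above vanishes and \ref{item:A-B-eta-f} holds trivially; alternatively this is the injectivity clause of \Cref{thm:P-B} applied to (P). If $J$ and $R$ are constant, then $J(z_1) - J(z_2) = 0$ and $R(z_1) - R(z_2) = 0$, so the matrix $J(z_1) - J(z_2) - R(z_1) + R(z_2)$ is zero and its kernel is all of $\RR^n$, whence \ref{item:A-B-eta-f} again holds. I expect no substantive obstacle: the proof is essentially a translation of \Cref{thm:P-B} through the (A)-representation, and the only step needing care is forming the difference $f(z_1) - f(z_2)$ so that the skew-symmetric and symmetric parts combine into the single kernel condition of \ref{item:A-B-eta-f}.
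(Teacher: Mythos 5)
Your proof is correct, but it is organized differently from the paper's. The paper proves the cycle \ref{item:A-B-propB}\implies\ref{item:A-B-eta-f}\implies\ref{item:A-B-m}\implies\ref{item:A-B-propB} directly, repeating the equivalence-class construction of $\hat\eta$ and $\psi$ from the proof of \Cref{thm:P-B} and invoking $R(z)=R(z)\tp\succeq 0$ to obtain $v\tp m(v)\le 0$; for the constant-$J$, $R$ case it simply sets $j(v)\coloneq Jv$ and $r(v)\coloneq Rv$. You instead first note that (A) implies (P) via \Cref{prop:ph-is-passive} and then show that, under the substitution $f(z)=(J(z)-R(z))\eta(z)$, each of \ref{item:A-B-propB}, \ref{item:A-B-eta-f}, \ref{item:A-B-m} is literally the corresponding condition of \Cref{thm:P-B}, with the identity $f(z_1)-f(z_2)=(J(z_1)-J(z_2)-R(z_1)+R(z_2))\eta(z_1)$ doing the only real work. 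Your reduction buys brevity and avoids duplicating the quotient construction, at the cost of routing the sign condition $v\tp m(v)\le 0$ through passivity (i.e.\ through $\eta(z)\tp f(z)=-\ell(z)\tp\ell(z)$) rather than exhibiting it directly from $R(z)\succeq 0$; the paper's self-contained version makes that dependence explicit and handles the constant-coefficient case without passing through condition \ref{item:A-B-eta-f}. Both arguments are complete and reach the same conclusions.
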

\begin{proof}
    To show \ref{item:A-B-propB} \implies \ref{item:A-B-eta-f}, note that by definition of (A) and (B) there exist functions $J, R \colon \RR^n \to \RR^{n,n}$ and $j,r \colon \eta(\RR^n) \to \RR^n$ such that $f(z) = (J(z) - R(z))\eta(z) = j(\eta(z)) - r(\eta(z))$ for all $z \in \RR^n$.
    Hence, for $z_1, z_2 \in \RR^n$ with $\eta(z_1) = \eta(z_2)$ it follows that $(J(z_1) - R(z_1))\eta(z_1) = j(\eta(z_1)) - r(\eta(z_1)) = j(\eta(z_2)) - r(\eta(z_2)) = (J(z_2) - R(z_2))\eta(z_2)$, implying $(J(z_1) - J(z_2) - R(z_1) + R(z_2)) \eta(z_1) = 0$.

    For \ref{item:A-B-eta-f} \implies \ref{item:A-B-m}, we can proceed as in the proof of \Cref{thm:P-B} to see that the map $\hat{\eta}\colon \RR^n/\!\sim \,\to \eta(\RR^n), ~ [z] \mapsto \eta(z)$ is bijective.
    Then, for $v\in \eta(\RR^n)$ we can define 
    \begin{equation*}
        m(v) \coloneq \Big(J\big(\psi(\hat{\eta}\inv(v))\big) - R\big(\psi(\hat{\eta}\inv(v))\big)\Big) v,
    \end{equation*}
    where $\psi\colon \RR^n/\!\sim \,\to \RR^n,~ [z] \mapsto z$ again picks an arbitrary representative.
    Now, observe that condition \ref{item:A-B-eta-f} ensures that the map $m\colon \eta(\RR^n) \to \RR^n$ is well defined and that by definition we have $m(\eta(z)) = ( J(z) - R(z) ) \eta(z)$ for all $z\in\RR^n$.

    For \ref{item:A-B-m} \implies \ref{item:A-B-propB}, observe that by $R(z) = R(z)\tp \succeq 0$ we have
    \begin{align*}
        \eta(z)\tp m(\eta(z)) 
        & = \eta(z)\tp (J(z) - R(z)) \eta(z) \\
        & = - \eta(z)\tp R(z) \eta(z) \leq 0,
    \end{align*}
    or in other words $v\tp m(v) \leq 0$ for all $v \in \eta(\RR^n)$.
    Hence, we obtain a representation of~\eqref{eq:nonlinear-system} in the form~\eqref{eq:ph-b} by setting $j\coloneq0$ and $r\coloneq-m$.

    To finish the proof, note that the first sufficient condition implies \ref{item:A-B-eta-f} and hence also property (B), and that with the second sufficient condition we can define $j(v) \coloneq Jv$ and $r(v) \coloneq Rv$ for all $v\in\eta(\RR^n)$, where the structural properties of $j$ and $r$ follow from the respective properties of $J$ and $R$.
\end{proof}

\begin{remark}\label{rem:A-B-insight-injectivity}
    To give a little insight into the sufficiency of the injectivity of~$\eta$ in \Cref{thm:A-B}, observe that we may also arrive at a representation in the form~\eqref{eq:ph-b} by setting 
    \begin{equation*}
        j(v) \coloneq J(\eta\inv(v)) v \und r(v) \coloneq R(\eta\inv(v)) v
    \end{equation*}
    for all $v \in \eta(\RR^n)$.
\end{remark}

\subsection{Towards (A)}\label{subsec:towards-A}

Unfortunately, unlike the situation in \Cref{subsec:towards-B}, the properties (P) and (B) are not particularly helpful in implications towards (A).
This is because if a decomposition $f(z) = (J(z) - R(z))\eta(z)$ with $J(z) = -J(z)\tp$ and $R(z) = R(z)\tp$ is known, the passivity condition $\eta(z)\tp f(z) = - \ell(z)\tp \ell(z)$ and the condition $v\tp r(v) \geq 0$ for $v \in \eta(\RR^n)$ only imply $v\tp R(z) v \geq 0$ for $v \in \Span\{\eta(z)\}$, in contrast to $R(z) \succeq 0$.
In other words, $R(z) \succeq 0$ is an intrinsic property in the model class \eqref{eq:ph-a} that does not follow from passivity features.
Note that this phenomenon is not unique to nonlinear systems:
If $f(z) = - K Q z$ with singular $Q = Q\tp \succeq 0$, then $z\tp Q K Q z \succeq 0$ only implies $v\tp K v \geq 0$ for $v \in \ran(Q) \neq \RR^n$.  

In \Cref{thm:P-A} we highlight that $R(z) \succeq 0$ is an intrinsic property of the structure~\eqref{eq:ph-a} and provide a sufficient condition for (A) to hold.
Note that the equivalence of~\ref{item:P-A-propA}) and~\ref{item:P-A-M}) is well known and is mentioned in, e.g.,~\cite{cheng05-feedback}.
See also \Cref{rem:P-A-mclachlan} for comments on the sufficient condition, which has appeared similarly in~\cite{mclachlan99-geometric}.

\begin{proposition}\label{thm:P-A}
    The following are equivalent:
    \begin{enumerate}
    \item \label{item:P-A-propA}
        Property (A) holds.
    \item \label{item:P-A-M}
        There exists a matrix-valued map $M\colon \RR^n \to \RR^{n,n}$ such that $f(z) = M(z) \eta(z)$ and $M(z) \preceq 0$ for all $z\in \RR^n$.
    \end{enumerate}
    Further, the following condition implies (A):
    \begin{itemize}
    \item
        It holds that $f, \eta \in C^1$, $\eta$ is bijective, $(f\circ\eta\inv)(0) = 0$, $D\eta(z)$ is invertible for all $z\in\RR^n$, and $Df(z) D\eta(z)\inv \preceq 0$ for all $z\in\RR^n$.
    \end{itemize}
\end{proposition}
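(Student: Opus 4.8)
The plan is to first dispatch the equivalence of \ref{item:P-A-propA} and \ref{item:P-A-M} by passing between a single matrix and its symmetric/skew-symmetric parts, and then to prove the sufficient condition by a change of variables that puts us in a position to apply \Cref{thm:f-F}.

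For \ref{item:P-A-propA} \implies \ref{item:P-A-M}, I would set $M(z) \coloneq J(z) - R(z)$. Since the quadratic form ignores the skew-symmetric part, $z\tp M(z) z = -z\tp R(z) z \leq 0$, i.e.\ $M(z)\preceq 0$, while $f(z) = M(z)\eta(z)$ is immediate from (A). Conversely, for \ref{item:P-A-M} \implies \ref{item:P-A-propA}, I would decompose $M(z) = M(z)\sym + M(z)\skew$ and put $J(z) \coloneq M(z)\skew$ and $R(z) \coloneq -M(z)\sym$. Then $J(z)$ is skew-symmetric, $R(z)$ is symmetric with $z\tp R(z) z = -z\tp M(z) z \geq 0$, hence $R(z)\succeq 0$, and $J(z) - R(z) = M(z)$, so $f(z) = (J(z)-R(z))\eta(z)$ is exactly the form required by (A).

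For the sufficient condition, the idea is to \emph{straighten out} $\eta$. As $\eta$ is a $C^1$ bijection with everywhere invertible Jacobian, the inverse function theorem shows that $\eta\inv\colon\RR^n\to\RR^n$ is $C^1$ with $D(\eta\inv)(v) = D\eta(\eta\inv(v))\inv$, so that the local inverses patch together into a global $C^1$ map. Hence $\phi \coloneq f\circ\eta\inv$ is $C^1$, satisfies $\phi(0) = 0$ by hypothesis, and by the chain rule obeys
\[
    D\phi(v) = Df(\eta\inv(v))\, D\eta(\eta\inv(v))\inv \preceq 0
    \qquad\text{for all } v\in\RR^n,
\]
where the sign comes from the assumption $Df(z)D\eta(z)\inv\preceq 0$ evaluated at $z = \eta\inv(v)$. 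Applying \Cref{thm:f-F} to $\phi$ gives $\phi(v) = \Phi(v)v$ with $\Phi(v) = \int_0^1 D\phi(sv)\ds$, and for any $w\in\RR^n$ we obtain $w\tp\Phi(v)w = \int_0^1 w\tp D\phi(sv)w\ds \leq 0$, so $\Phi(v)\preceq 0$. Setting $M(z)\coloneq\Phi(\eta(z))$ then yields $f(z) = \phi(\eta(z)) = M(z)\eta(z)$ with $M(z)\preceq 0$, and the already-proven equivalence delivers (A).

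The conceptual heart is the substitution $\phi = f\circ\eta\inv$, which transports the hypothesis into the coordinate in which \Cref{thm:f-F} is available; the remaining manipulations are routine. The step requiring the most care is promoting the local diffeomorphism property to a globally $C^1$ inverse defined on all of $\RR^n$ — this is precisely where global bijectivity of $\eta$ together with the everywhere-invertible Jacobian is used, and where the condition $(f\circ\eta\inv)(0)=0$ is needed so that the lemma applies at the origin. The preservation of $\preceq 0$ under the integral is harmless, since $\{A : A\preceq 0\}$ is a convex cone closed under integration of continuous integrands.
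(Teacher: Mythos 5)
Your proposal is correct and follows essentially the same route as the paper: the equivalence via $J = M\skew$, $R = -M\sym$, and the sufficient condition by applying \Cref{thm:f-F} to $f\circ\eta\inv$ with the explicit integral representation $\int_0^1 D(f\circ\eta\inv)(sv)\ds$ to preserve negative semidefiniteness. Your added remarks on gluing the local inverses into a global $C^1$ inverse and on the convexity of the cone $\{A : A\preceq 0\}$ make explicit two points the paper leaves implicit, but the argument is the same.
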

\begin{proof}
    The equivalence \ref{item:P-A-propA} \!\!$\Leftrightarrow$\! \ref{item:P-A-M} is straightforward with $J(z) = M(z)\skew$ and $R(z) = -M(z)\sym$.

    Regarding the sufficient condition, first observe that the inverse function rule states that $\eta\inv$ is differentiable with derivative $D\eta\inv(\eta(z)) = D\eta(z)\inv$ for all $z\in\RR^n$.
    Now notice that $D(f\circ \eta\inv)(v) = Df(\eta\inv(v)) D\eta\inv(v) = Df(z) D\eta(z)\inv$ for $v=\eta(z)$ due to the chain rule, which shows that the Jacobian of $f\circ \eta\inv$ is pointwise negative semidefinite.
    Hence \Cref{thm:f-F} implies the existence of $N\colon \RR^n \to \RR^{n,n}$ such that 
    \begin{equation*}
       (f\circ \eta\inv )(v) = N(v) v
    \end{equation*}
    with $N(v) \preceq 0$ for all $v\in\RR^n$.
    Plugging in $v = \eta(z)$, we obtain $f(z) = N(\eta(z)) \eta(z)$ for all $z\in\RR^n$.
    Hence, setting $M(z) \coloneq N(\eta(z))$ and using the equivalence of \ref{item:P-A-M} and \ref{item:P-A-propA} finishes the proof.
\end{proof}

\begin{remark}\label{rem:P-A-mclachlan}
    A similar strategy as in the sufficient condition of \Cref{thm:P-A} was used in the proofs of Propositions 2.4 and 2.11 in~\cite{mclachlan99-geometric}.
    The differences between~\cite{mclachlan99-geometric} and our approach are as follows.
    First,~\cite{mclachlan99-geometric} makes local statements around nondegenerate critical points $\bar{z}\in\RR^n$ of $\ham$, which are points where $\eta(\bar{z}) = 0$ and $D\eta(\bar{z})$ is invertible.
    They then use the fact that locally around $\bar{z}$ there exists a coordinate transformation $\phi$ such that $\ham(\phi(z)) = \frac12 \phi(z) \tp B \phi(z)$ with $B$ being nonsingular and subsequently only consider the case $\eta(z) = Bz$.
    In contrast, our statement is of a global nature and therefore requires $D\eta(z)$ to be globally invertible. 
    Furthermore, we make the change of variables explicit.
    The second difference is in the semidefiniteness assumption $Df(z) D\eta(z)\inv \preceq 0$.
    In~\cite[Propositions 2.4 and 2.11]{mclachlan99-geometric}, the authors only consider the cases of skew-symmetric or positive definite $Df(z) D\eta(z)\inv$.%
    \footnote{The negative definite case can be recovered by replacing $f$ by $-f$. Further, the semidefinite case is covered in~\cite[Proposition 2.10]{mclachlan99-geometric} using a different strategy, which we discuss in \Cref{subsec:general-case}.}
    For the latter, they replace our assumption by the assumption that locally $\eta(z)\tp f(z) \geq b \norm{f(z)} \norm{\eta(z)}$ for a constant $b>0$.
    Together with the invertibility of $Df(z)$, this then implies the definiteness of $N$ as in the proof of \Cref{thm:P-A}.
    Hence, our assumption is generally weaker, but might be more difficult to verify.
\end{remark}

\begin{remark}
    In the case of linear time-invariant systems, the assumptions in the sufficient condition of \Cref{thm:P-A} read as follows.
    Assume $\dot{z} = Az$ for some $A \in \RR^n$, and assume $\eta(z) = Q z$ with $Q = Q\tp \succeq 0$.
    Then $\eta$ is bijective if and only if $Q$ is invertible. 
    In this case, we have $D\eta(z) = Q$ as well as $Df(z) D\eta(z)\inv = AQ\inv$.
    Hence, the condition $Df(z) D\eta(z)\inv \preceq 0$ reduces to $AQ\inv \preceq 0$, which is equivalent to $AQ\inv + Q\inv A\tp \preceq 0$.
    Multiplying this inequality by $Q=Q\tp$ from both sides gives 
    \begin{equation*}
        QA + A\tp Q \preceq 0,
    \end{equation*}
    which is well known and can also be found in, e.g.,~\cite{willems72-dissipative2}.
\end{remark}

Let us remark on how the sufficient condition in \Cref{thm:P-A} can be used  to construct \pH realizations, which is the focus of \Cref{sec:constructions}.

\begin{remark}\label{rem:P-A-definition-of-J-R}
    From \Cref{thm:f-F} it follows that a possible choice for $M(z)$ in the proof of the sufficient condition of \Cref{thm:P-A} is
    \begin{equation*}
        M(z) = \int_{0}^{1} Df(sz) D\eta(sz)\inv \ds.
    \end{equation*}
    In particular, we may arrive at a representation of~\eqref{eq:nonlinear-system} in the form~\eqref{eq:ph-a} by choosing $J(z) \coloneq M(z)\skew$ and $R(z) \coloneq - M(z)\sym \preceq 0$. 
\end{remark}

\begin{remark}\label{rem:P-A-and-A-B}
    If the sufficient condition in \Cref{thm:P-A} holds, then we can write the system in the form~\eqref{eq:ph-b}, since $\eta$ is assumed to be bijective.
    One possibility is setting 
    \begin{equation*}
        j(v)\coloneq 0 \und r(v) \coloneq - N(v) v
    \end{equation*} 
    for all $v\in \RR^n$, where $N$ is defined as in the proof of \Cref{thm:P-A}.
\end{remark}

We obtain a similar result for the relationship of~\eqref{eq:ph-b} and~\eqref{eq:ph-a}.

\begin{proposition}[(B) \implies (A)]\label{thm:B-A}
    If property (B) holds, then any of the following conditions implies~(A):
    \begin{itemize}
    \item
        There exists a matrix-valued map $N\colon \RR^n \to \RR^{n,n}$ such that $j(v) - r(v) = N(v) v$ and $N(v) \preceq 0$ for all $v \in \eta(\RR^n)$.
    \item
        We have $j(0) - r(0) = 0$, $j-r \in C^1$, and $D(j-r)(v) \preceq 0$ for all $v \in \eta(\RR^n)$.
    \end{itemize}
\end{proposition}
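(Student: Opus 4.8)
The plan is to route both sufficient conditions through the characterization in item~\ref{item:P-A-M} of \Cref{thm:P-A}, which says that (A) is equivalent to writing $f(z) = M(z)\eta(z)$ with $M(z)\preceq 0$ for all $z$. Since (B) already gives $f(z) = j(\eta(z)) - r(\eta(z))$, I only need to produce such an $M$ in each case. For the first condition this is immediate: I would set $M(z) \coloneq N(\eta(z))$, so that $f(z) = N(\eta(z))\eta(z) = M(z)\eta(z)$, and because $\eta(z) \in \eta(\RR^n)$ the hypothesis $N(v)\preceq 0$ yields $M(z)\preceq 0$. \Cref{thm:P-A} then delivers (A).

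For the second condition my plan is to reduce to the first by manufacturing $N$ from $g \coloneq j-r$. Since $g(0) = j(0)-r(0) = 0$ and $g\in C^1$, \Cref{thm:f-F} supplies $N(v) = \int_0^1 Dg(sv)\ds$ with $g(v) = N(v)v$, which is exactly the identity required by the first bullet. The sign condition transfers by a direct computation: for every $w \in \RR^n$ we have $w\tp N(v) w = \int_0^1 w\tp Dg(sv) w\ds \leq 0$, since each integrand is nonpositive by $Dg(sv)\preceq 0$. Hence $N(v)\preceq 0$ on $\eta(\RR^n)$ and the first case applies.

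The step that needs genuine care is not the linear algebra but the domain on which \Cref{thm:f-F} is invoked. That lemma is phrased for maps on all of $\RR^n$, whereas $g = j-r$ is only given on $\eta(\RR^n)$, and the representative $N(v) = \int_0^1 Dg(sv)\ds$ is only meaningful when the entire segment $\{sv : s\in[0,1]\}$ lies in the set where $g$ is defined and differentiable. This holds automatically once $0\in\eta(\RR^n)$ and $\eta(\RR^n)$ is star-shaped with respect to the origin, and in particular whenever $\eta$ is surjective so that $\eta(\RR^n) = \RR^n$. In the absence of such structure I would first extend $g$ to a $C^1$ map on $\RR^n$ whose Jacobian remains negative semidefinite along these segments and then argue as above; securing that extension, rather than the averaging estimate, is the main obstacle.
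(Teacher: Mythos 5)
Your proof is correct and follows essentially the same route as the paper: the first condition is handled by the construction $M(z)=N(\eta(z))$ with $J(z)=M(z)\skew$ and $R(z)=-M(z)\sym$, and the second is reduced to the first by applying \Cref{thm:f-F} to $j-r$. Your caveat about the domain of $j-r$ is a fair point that the paper's one-line argument glosses over: the representative $N(v)=\int_0^1 D(j-r)(sv)\ds$ requires the segment $\{sv : s\in[0,1]\}$ to lie in the set where $j-r$ is defined and its Jacobian is negative semidefinite, which is automatic when $\eta$ is surjective (or $\eta(\RR^n)$ is star-shaped about $0$) but otherwise needs the extension you describe.
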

\begin{proof} 
    Regarding the first sufficient condition, note that we can define $J(z) \coloneq N(\eta(z))\skew$ and $R(z) \coloneq -N(\eta(z))\sym$ such that
    \begin{equation*}
        ( J(z) - R(z) )\eta(z) = N(\eta(z)) z = j(\eta(z)) - r(\eta(z)),
    \end{equation*}
    where $R(z) \succeq 0$ follows from $N(\eta(z)) \preceq 0$.

    For the second sufficient condition, observe that \Cref{thm:f-F} implies the existence of a map $N$ as in the first sufficient condition.
\end{proof}

Unfortunately, the sufficient condition in \Cref{thm:P-A} can be quite restrictive, see \Cref{ex:rigid-body} in \Cref{sec:examples}.
A less restrictive sufficient condition is obtained in the following result.

\begin{proposition}\label{thm:P-A-fix}
    The following condition implies (A):
    \begin{itemize}
    \item
        It holds that $f, \eta \in C^1$, $\eta$ is bijective, $(f\circ\eta\inv)(0) = 0$, $D\eta(z)$ is invertible for all $z\in\RR^n$, and there exists $P\colon \RR^n\to\RR^{n,n}$ such that 
        \begin{equation}\label{eq:P-pointwise}
            M(z) + P(z) \preceq 0 \und P(z) \eta(z) = 0
        \end{equation}
        for all $z \in \RR^n$, where $M(z) = \int_{0}^{1} Df(sz) D\eta(sz)\inv \ds$ is defined as in \Cref{rem:P-A-definition-of-J-R}.
    \end{itemize}
\end{proposition}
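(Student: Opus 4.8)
The plan is to reduce everything to the criterion~\ref{item:P-A-M} of \Cref{thm:P-A}: since \ref{item:P-A-M} and \ref{item:P-A-propA} are equivalent, it suffices to exhibit a single matrix-valued map $\tilde M\colon\RR^n\to\RR^{n,n}$ with $f(z)=\tilde M(z)\eta(z)$ and $\tilde M(z)\preceq 0$ for every $z$. The hypotheses in~\eqref{eq:P-pointwise} are tailored precisely so that the natural candidate $\tilde M\coloneq M+P$ does the job.

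First I would record the identity $f(z)=M(z)\eta(z)$ for the specific $M$ named in the statement. This is not an extra assumption but a consequence of the hypotheses $f,\eta\in C^1$, $\eta$ bijective, $(f\circ\eta\inv)(0)=0$ and $D\eta(z)$ invertible: these are exactly the conditions under which the proof of the sufficient part of \Cref{thm:P-A} applies \Cref{thm:f-F} to $f\circ\eta\inv$ and produces $M$ with $f=M\eta$. With this identity available, I would set $\tilde M(z)\coloneq M(z)+P(z)$ and verify the two required properties directly. Negative semidefiniteness $\tilde M(z)\preceq 0$ is immediate from the first relation in~\eqref{eq:P-pointwise}. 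The factorisation follows from the second relation $P(z)\eta(z)=0$, since
\begin{equation*}
    \tilde M(z)\eta(z)=M(z)\eta(z)+P(z)\eta(z)=f(z)+0=f(z).
\end{equation*}
Thus $\tilde M$ meets both conditions of~\ref{item:P-A-M}, and \Cref{thm:P-A} yields (A); concretely one may take $J(z)\coloneq\tilde M(z)\skew$ and $R(z)\coloneq-\tilde M(z)\sym$, the latter being positive semidefinite because $\tilde M(z)\preceq 0$ forces $\tilde M(z)\sym\preceq 0$.

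There is no genuine analytic obstacle; once the identity $f=M\eta$ is in place the argument is a one-line algebraic manipulation. The conceptual content lies entirely in the freedom provided by $P$: demanding $M(z)\preceq 0$ outright, as in \Cref{thm:P-A}, may fail, whereas any $P(z)$ with $\eta(z)\in\ker P(z)$ leaves the product $(M+P)\eta$, and hence the factorisation of $f$, unchanged while offering room to modify $M(z)$ enough to land in the negative semidefinite cone. The only point requiring care is bookkeeping rather than difficulty: one must confirm that the full list of regularity and bijectivity hypotheses is exactly what legitimises $f=M\eta$, after which the existence of an admissible $P$ is the assumed hypothesis and nothing further need be shown.
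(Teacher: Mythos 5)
Your proposal is correct and follows the same route as the paper: establish $f(z)=M(z)\eta(z)$ via the argument in the proof of \Cref{thm:P-A}, then use $P(z)\eta(z)=0$ and $M(z)+P(z)\preceq 0$ to take $J(z)=(M(z)+P(z))\skew$ and $R(z)=-(M(z)+P(z))\sym\succeq 0$. Naming $\tilde M\coloneq M+P$ and passing through criterion~\ref{item:P-A-M} is only a cosmetic repackaging of the paper's one-line verification.
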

\begin{proof}
    The proof of \Cref{thm:P-A} shows that our assumptions imply the well-definedness of $M(z)$ and $f(z) = M(z) \eta(z)$ for all $z\in \RR^n$.
    To show the claim, note that we can choose $J(z) = M(z)\skew + P(z)\skew$ and $R(z) = - M(z)\sym - P(z)\sym$ to arrive at a representation in the form~\eqref{eq:ph-a}.
\end{proof}

\begin{remark}\label{rem:P-A-fix-phi}
    One possible choice for the function $P$ in \Cref{thm:P-A-fix} is $P(z) = \int_{0}^{1} \phi(sz) \ds$, where $\phi \colon \RR^n \to \RR^{n,n}$ is such that 
    \begin{equation*}
        Df(z) D\eta(z)\inv + \phi(z) \preceq 0 \und \phi(sz) \eta(z) = 0
    \end{equation*} 
    for all $z\in\RR^n$ and $s\in[0,1]$.
\end{remark}

In the next section, we shift our attention towards finding a function $P$ as in \Cref{thm:P-A-fix}.

\section{Constructing \portHamiltonian representations}
\label{sec:constructions}

The results from \Cref{sec:characterizations} can be used to construct \portHamiltonian representations of passive systems.
Throughout this section, we focus on representations of the form~\eqref{eq:ph-a} and make the following assumption.

\begin{assumption}\label{as:constructions}
    Property (P) holds, and we have $f,\eta \in C^1$, $\eta$ is bijective, $(f\circ \eta\inv)(0) = 0$, and $D\eta(z)$ is invertible for all $z \in \RR^n$.
\end{assumption}

As we have mentioned before, the sufficient condition in \Cref{thm:P-A} can be restrictive, and thus also the construction in \Cref{rem:P-A-definition-of-J-R} is not always feasible.
In particular, if the condition $Df(z)D\eta(z)\inv \preceq 0$ from \Cref{thm:P-A} does not hold, then the construction in the proof can still be carried out, but gives $f(z) = M(z)\eta(z)$ with $M(z)\not\preceq 0$ in general.
Luckily, \Cref{thm:P-A-fix} provides a different strategy to construct \portHamiltonian representations in these cases.
If we can find $P$ as in~\eqref{eq:P-pointwise}, then we can again construct a representation of the system in the form~\eqref{eq:ph-a} by setting $J(z) \coloneq M(z)\skew + P(z)\skew$ and $R(z) := - M(z)\sym - P(z)\sym$. 
The task of finding a suitable function $P$ is not trivial in general, which is why we first consider a special case.
For ease of notation, we mostly suppress the state dependency in the following.

\subsection{Conservative systems}
\label{subsec:conservative-systems}

Let us assume that $f = J \eta$ for some unknown $J = -J\tp$.
The problem of identifying~$J$ in this conservative case was studied in, e.g.,~\cite{quispel96-solving}, where a full characterization of possible functions $J$ was given.
Here, we present an alternative characterization.

We aim for $P$ such that 
\begin{equation}\label{eq:P-conservative}
    P \eta = 0 \und M\sym + P\sym = 0,
\end{equation}
from which we immediately deduce that $P\sym = - M\sym$ and thus $P\skew \eta = M\sym \eta$.
Since $P\skew$ is skew-symmetric, it is determined by its $\frac{n^2 - n}{2}$ entries in the upper triangular part (without the diagonal), and thus $P\skew \eta = M\sym \eta$ is a linear system of equations in the entries of $P\skew$, where we have $n$ equations for $\frac{n^2 - n}{2}$ unknowns.
A solution of~\eqref{eq:P-conservative} is $P = - M + J$, but we can not expect the solution to be unique.
Therefore, in the following we restrict ourselves to the case that~$P\skew$ is tridiagonal.
As we will see, this is possible for states $z\in\RR^n$ with $\eta_i(z) \neq 0$ for $i=2,\dots,n-1$.
Let us collect the entries of the superdiagonal of $P\skew$ in $\pj \coloneq \vec{p_1 & \cdots & p_{n-1}}$.
Then $P\skew \eta = M\sym \eta$ may be written as 
\begin{equation}\label{eq:T-smallsystem-conservative}
    \Tj(\eta) \pj = M\sym \eta
\end{equation} 
with
\begin{equation*}
    \Tj(\eta) \coloneq 
    \vec{
        \eta_2  \\
        - \eta_1 & \eta_3  \\
        & \ddots & \ddots  \\
        & & -\eta_{n-2} & \eta_{n} \\
        & & &  -\eta_{n-1}
    } \in \RR^{n, n-1}.
\end{equation*}
If $\eta_i \neq 0$ for $i=2,\dots,n-1$, then $\rank(\Tj(\eta)) = n-1$.
In particular, the system~\eqref{eq:T-smallsystem-conservative} has a unique solution in this case, and there exists a unique $P$ with $P\eta = 0,~ P\sym + M\sym = 0$, where $P\skew$ is tridiagonal.
We summarize our findings in the following theorem.

\begin{theorem}[conservative systems]\label{thm:conservative-systems}
    In addition to \Cref{as:constructions}, assume $f = J \eta$ for some unknown $J = -J\tp$.
    Then there exists $P$ such that~\eqref{eq:P-conservative} holds, and for states $z$ with $\eta_i(z) \neq 0$ for $i = 2, \dots, n-1$ we can choose $P\skew$ as a uniquely determined tridiagonal matrix.
\end{theorem}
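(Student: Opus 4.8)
The plan is to dispatch existence quickly and then concentrate on the tridiagonal uniqueness, which is the real content. For the mere existence of a $P$ satisfying~\eqref{eq:P-conservative}, I would exhibit $P \coloneq -M + J$ and check both conditions directly: since $J$ is skew-symmetric, its symmetric part vanishes, so $P\sym = -M\sym$ and hence $M\sym + P\sym = 0$; moreover $P\eta = -M\eta + J\eta = -f + f = 0$, using $M\eta = f$ from the proof of \Cref{thm:P-A} (valid under \Cref{as:constructions}) together with the conservative hypothesis $f = J\eta$. This part needs no assumption on the components of $\eta$ and holds for all states $z$.

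For the tridiagonal statement, fixing $P\sym = -M\sym$ via the second equation in~\eqref{eq:P-conservative} reduces the first equation to $P\skew \eta = M\sym\eta$, and restricting $P\skew$ to skew-symmetric tridiagonal matrices, parametrized by the superdiagonal $\pj$, turns this into the overdetermined system~\eqref{eq:T-smallsystem-conservative}, namely $\Tj(\eta)\pj = M\sym\eta$, with $n$ equations in $n-1$ unknowns. The key computation is $\ker(\Tj(\eta)\tp) = \Span\{\eta\}$: the rows of $\Tj(\eta)\tp$ impose $\eta_{j+1}w_j = \eta_j w_{j+1}$ for $j = 1,\dots,n-1$, and using $\eta_i \neq 0$ for $i = 2,\dots,n-1$ to chain the interior ratios $w_i/\eta_i$ together with the two boundary relations to fix $w_1$ and $w_n$, these force $w \in \Span\{\eta\}$. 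This single fact does double duty: it gives $\rank(\Tj(\eta)) = n-1$, so that $\pj \mapsto \Tj(\eta)\pj$ is injective and any solution of~\eqref{eq:T-smallsystem-conservative} is unique; and, via $\ran(\Tj(\eta)) = \ker(\Tj(\eta)\tp)^\perp$, it identifies $\ran(\Tj(\eta)) = \{\eta\}^\perp$.

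It then remains to verify that the right-hand side of~\eqref{eq:T-smallsystem-conservative} lies in this range, i.e.\ $M\sym\eta \perp \eta$, which is exactly where the conservative structure enters:
\begin{equation*}
    \eta\tp M\sym\eta = \eta\tp M\eta = \eta\tp f = \eta\tp J\eta = 0,
\end{equation*}
first discarding the skew part of $M$, then using $f = M\eta$, and finally $J = -J\tp$. Hence~\eqref{eq:T-smallsystem-conservative} is solvable, and by full column rank its solution $\pj$ is unique, yielding a uniquely determined tridiagonal $P\skew$ and, together with $P\sym = -M\sym$, a uniquely determined $P$ of the required form.

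The step I expect to be the main obstacle is the consistency (solvability) of the overdetermined system~\eqref{eq:T-smallsystem-conservative}, since it hinges entirely on $\eta\tp f = 0$ and would fail for a genuinely dissipative $f$; the kernel computation, by contrast, is routine linear algebra exploiting the bidiagonal pattern of $\Tj(\eta)\tp$. As an a priori guarantee one may also note that the explicit skew solution $P\skew = J - M\skew$ coming from $P = -M + J$ already certifies $M\sym\eta \in \{\eta\}^\perp$, independently of the tridiagonal reduction.
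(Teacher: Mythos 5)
Your proposal is correct and follows essentially the same route as the paper: existence via $P = -M + J$, and reduction of the tridiagonal case to the system $\Tj(\eta)\pj = M\sym\eta$, whose coefficient matrix has rank $n-1$ under the stated nonvanishing condition. The only difference is that you make the consistency check $M\sym\eta \in \ran(\Tj(\eta)) = \Span\{\eta\}^\perp$ (via $\eta\tp M\sym\eta = \eta\tp J\eta = 0$) explicit, which the paper leaves implicit at this point and only spells out in the general-case discussion of \Cref{subsec:general-case}.
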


\begin{remark}\label{rem:P-continuity-conservative}
    An obvious question is the continuity of $P$ with respect to the state variable $z$.
    We focus on the special case that $P\skew$ can be chosen as a tridiagonal matrix.
    From the discussion above, we know that on the dense open subset $E \coloneq \{ \eta(z)\in\RR^n ~|~ z\in\RR^n \text{ with } \eta_i(z) \neq 0 ~ \text{for $i = 2,\dots,n-1$}\}$, the system~\eqref{eq:T-smallsystem-conservative} has a unique solution $\pj$.
    Since $\rank(\Tj(\eta)) = n-1$ on $E$, this $\pj$ is the unique solution to the normal equations
    \begin{equation*}
        \Tj(\eta)\tp \Tj(\eta) \pj = \Tj(\eta)\tp M\sym \eta,
    \end{equation*}
    or, in other words, $\pj = (\Tj(\eta)\tp \Tj(\eta))\inv \Tj(\eta)\tp M\sym \eta$.
    As $\eta\inv$ is continuous by our assumptions, on $E$ the solution $\pj$ depends continuously on $z$.
    Hence, if 
    \begin{equation*}
        \eta \mapsto (\Tj(\eta)\tp \Tj(\eta))\inv \Tj(\eta)\tp M\sym \eta
    \end{equation*}
    extends continuously from $E$ to $\RR^n = \eta(\RR^n)$, then $\pj$ (and therefore also $P$) is continuous with respect to $z$.
    In~\Cref{ex:rigid-body}, this continuous extension is possible.

    Further, we remark that the case of analytic $f$ and $\eta$ has been studied in~\cite[Proposition 2.4]{mclachlan99-geometric}.
\end{remark}

\subsection{The general case}
\label{subsec:general-case}

As we have mentioned in the introduction, the construction of $J=-J\tp$ and $R=R\tp\succeq0$ such that $f = (J - R)\eta$ was studied in, e.g.,~\cite{mclachlan99-geometric,ortega02-interconnection,wang03-generalized}.
A common feature in these approaches is that~$f$ is decomposed into $f = f_1 + f_2$, where~$f_1$ and~$f_2$ correspond to the energy conserving and energy dissipating parts of the dynamics, respectively.
Naturally, the function~$J$ is then constructed from~$f_1$, and~$R$ is constructed from~$f_2$.
In all of these approaches,~$R$ contains the factor~$\norm{\eta}^{-2}$ stemming from the fact that if $f = v + \beta \eta$ with $v \in \Span\{\eta\}^\perp$ then $\beta = \norm{\eta}^{-2} f\tp \eta$.
Here, we present an approach where $R$ does not necessarily contain this factor.
The idea in our approach is to use information from $f = M\eta$ even though $M \not\preceq 0$ in general.

In the case $\eta=0$, the matrix $P$ can be chosen arbitrarily as long as $M\sym+P\sym\preceq 0$, e.g.,~$P = - M\sym$.
Hence, we will restrict our analysis to the case $\eta\neq 0$.
Let $\subs \subseteq \RR^n$ be a subspace with $\RR^n = \Span\{\eta\} \oplus \subs$.
It is clear that in this case $\dim(\subs) = n-1$.
Let us write $\subs^\perp = \Span\{w\}$ for some $w\in\RR^n$ and define the projection $\proj \coloneq \frac{\eta w\tp}{w\tp \eta}$ which projects onto $\Span\{\eta\}$ along the subspace $\subs$. 
We observe that $\proj$ is well-defined, since $w\tp \eta = 0$ would imply $\eta \in (\subs^\perp)^\perp=\subs$ which contradicts our assumptions $\eta\neq 0$ and $\RR^n = \Span\{\eta\} \oplus \subs$.
Additionally, $\cproj \coloneq I_n-\proj$ is again a projection which projects onto $\subs$ along $\Span\{\eta\}$.
In particular, we have $\cproj \eta = 0$.
We can now decompose $M\sym$ as         
\begin{align*}
    M\sym 
    & = 
    (\proj + \cproj)\tp M\sym (\proj + \cproj) 
    \\
    & 
    = 
    \proj\tp M\sym \proj
    +
    \proj\tp M\sym \cproj
    +
    \cproj\tp M\sym \proj
    +
    \cproj\tp M\sym \cproj.
\end{align*}
The first summand is negative semidefinite, since $\ran(\proj) = \Span\{\eta\}$ and $(\alpha\eta)\tp M\sym (\alpha \eta)=\alpha^2\eta\tp f\le 0$ for all $\alpha\in\RR$. 
This motivates the choice 
\begin{equation}\label{eq:Psym-general}
    \begin{aligned}
        P\sym 
        & = 
        -
        \proj\tp M\sym \cproj
        -
        \cproj\tp M\sym \proj
        -
        \cproj\tp M\sym \cproj
        \\
        & = 
        - M\sym + \proj\tp M\sym \proj.
    \end{aligned}
\end{equation}
In general, $P\sym \eta \neq 0$, which is why we need to determine $P\skew$ such that $P\skew \eta = - P\sym \eta$.
The marix $P\skew$ can be constructed similarly as in \Cref{subsec:conservative-systems}. 
Here we need to solve the system 
\begin{equation}\label{eq:T-system-general}
    \bfTj(\eta) \bfpj = -P\sym \eta = \cproj\tp M\sym \eta,
\end{equation}
where $\bfpj$ lexicographically orders the entries of the strict upper triangular part of $P\skew$.
Since $\eta\neq0$, from the discussion in \Cref{subsec:conservative-systems} it follows that $\rank(\bfTj(\eta)) = n-1$. 
Further, $\eta\tp \bfTj(\eta)=0$ so that $\ran(\bfTj(\eta)) = \Span\{\eta\}^\perp$. 
System \eqref{eq:T-system-general} now has a solution since $\cproj \eta = 0$ so that
$
    \eta\tp \cproj\tp M\sym \eta = (\cproj\eta)\tp M\sym \eta = 0
$
and hence $\cproj\tp M\sym \eta\in \ran(\bfTj(\eta))$. 
Note that the arguments concerning the choice of~$P\skew$ as a tridiagonal matrix can be adapted to the present setting.

So far we have considered an arbitrary subspace $\subs$ satisfying $\RR^n = \Span\{\eta\} \oplus \subs$.
Here, we want to further remark on two particular choices for $\subs$.
The canonical choice $\subs = \Span\{\eta\}^\perp$ guarantees that the projections $\proj$ and $\cproj$ are well defined.
In this case we can choose $w = \eta$ so that $\proj = \frac{\eta\eta\tp}{\norm{\eta}^2}$ and
\begin{equation*}
    M\sym + P\sym
    =
    \frac{\eta\eta\tp M\sym \eta \eta\tp}{\norm{\eta}^4}
    =
    \frac{\eta \eta\tp f\tp \eta}{\norm{\eta}^4}.
\end{equation*}
This is the construction from~\cite{ortega02-interconnection}.
If $\eta\tp M\sym \eta \neq 0$, then $\Span\{\eta\}\not\subseteq \Span\{M\sym\eta\}^\perp$ and another interesting choice is $\subs = \Span\{M\sym\eta\}^\perp$. 
For this choice, the mixed terms in the first line of \eqref{eq:Psym-general} vanish, leaving us with $P\sym = -\cproj^T M\sym \cproj$. 
Thus $P\sym \eta = 0$ and no skew-symmetric matrix $P\skew$ needs to be constructed. 
In this case $M\sym + P\sym$ reads as 
\begin{equation*}
    M\sym + P\sym
    =
    \frac{M\sym \eta\eta\tp M\sym}{\eta\tp M\sym \eta}.
\end{equation*}
We summarize our findings in the following theorem.

\begin{theorem}\label{thm:P-general}
    Let \Cref{as:constructions} hold, and assume $\eta \neq 0$.
    Then there exists $P$ such that~\eqref{eq:P-pointwise} holds.
    For some subspace $\subs \subseteq \RR^n$ with $\RR^n=\Span\{\eta\}\oplus \subs$, one possible choice is $P = P\sym + P\skew$, where $P\sym$ is chosen as in~\eqref{eq:Psym-general} and the entries of $P\skew$ are determined by a solution of the system \eqref{eq:T-system-general}.
    If $\eta\tp M\sym \eta \neq 0$, then we can choose  $P= -\cproj^T M\sym \cproj=-M\sym + \frac{M\sym \eta\eta\tp M\sym}{\eta\tp M\sym\eta}$.
\end{theorem}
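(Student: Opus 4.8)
The plan is to assemble the facts worked out in the discussion preceding the statement into a verification of the two conditions in \eqref{eq:P-pointwise}, that is $M\sym + P\sym \preceq 0$ together with $P\eta = 0$, and then to specialize to the case $\eta\tp M\sym\eta \neq 0$. Throughout I would use that \Cref{as:constructions} guarantees $f = M\eta$ with $M$ well defined, so that $\eta\tp M\sym\eta = \eta\tp f \le 0$ by passivity.

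First I would fix a subspace $\subs$ with $\RR^n = \Span\{\eta\}\oplus\subs$, write $\subs^\perp = \Span\{w\}$, and form the complementary projections $\proj = \frac{\eta w\tp}{w\tp\eta}$ and $\cproj = I_n - \proj$, noting that $w\tp\eta \neq 0$ follows from $\eta \neq 0$ and the direct sum exactly as in the preamble. Defining $P\sym$ by \eqref{eq:Psym-general} gives $M\sym + P\sym = \proj\tp M\sym\proj$ at once, and the first condition reduces to the negative semidefiniteness of this single term: since $\ran(\proj) = \Span\{\eta\}$, every vector in its image is of the form $\alpha\eta$, and $(\alpha\eta)\tp M\sym(\alpha\eta) = \alpha^2\,\eta\tp f \le 0$. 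Hence $M\sym + P\sym \preceq 0$.

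Second, I would construct $P\skew$ realizing $P\eta = 0$, i.e. $P\skew\eta = -P\sym\eta = \cproj\tp M\sym\eta$, where the last identity uses $\proj\eta = \eta$. Collecting the strict upper-triangular entries of $P\skew$ into $\bfpj$ turns this into the linear system \eqref{eq:T-system-general}, and the one point that genuinely needs argument is its solvability. Here the plan is: the band structure and $\eta \neq 0$ give $\rank(\bfTj(\eta)) = n-1$ as in \Cref{subsec:conservative-systems}; together with $\eta\tp\bfTj(\eta) = 0$ this forces $\ran(\bfTj(\eta)) = \Span\{\eta\}^\perp$; and the right-hand side lies in this subspace because $\eta\tp\cproj\tp M\sym\eta = (\cproj\eta)\tp M\sym\eta = 0$ by $\cproj\eta = 0$. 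A solution $\bfpj$ therefore exists, yielding $P\skew$ with $P\skew\eta = \cproj\tp M\sym\eta$, so that $P = P\sym + P\skew$ meets both requirements of \eqref{eq:P-pointwise}. I expect this solvability step — identifying $\ran(\bfTj(\eta))$ with $\Span\{\eta\}^\perp$ and checking that the right-hand side lands there — to be the \emph{main obstacle}; the remaining manipulations are bookkeeping.

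Finally, for $\eta\tp M\sym\eta \neq 0$ I would take the admissible subspace $\subs = \Span\{M\sym\eta\}^\perp$, which indeed satisfies $\RR^n = \Span\{\eta\}\oplus\subs$ precisely because $\eta\tp M\sym\eta \neq 0$ rules out $\eta \in \subs$. With $w = M\sym\eta$ the projection becomes $\proj = \frac{\eta\eta\tp M\sym}{\eta\tp M\sym\eta}$, and the short computation $\eta\tp M\sym\cproj = \eta\tp M\sym - \eta\tp M\sym\proj = 0$ shows that the two mixed terms in the first line of \eqref{eq:Psym-general} vanish. This leaves $P = P\sym = -\cproj\tp M\sym\cproj$ with $P\sym\eta = 0$, so no skew-symmetric correction is needed, and $M\sym + P\sym = \proj\tp M\sym\proj = \frac{M\sym\eta\eta\tp M\sym}{\eta\tp M\sym\eta} \preceq 0$, the sign being clear since the numerator is positive semidefinite while $\eta\tp M\sym\eta = \eta\tp f < 0$.
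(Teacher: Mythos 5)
Your proposal is correct and follows essentially the same route as the paper: the paper's ``proof'' of \Cref{thm:P-general} is precisely the discussion preceding it, i.e.\ the decomposition of $M\sym$ via the complementary projections $\proj$ and $\cproj$, the choice \eqref{eq:Psym-general} leaving only the semidefinite term $\proj\tp M\sym\proj$, the solvability of \eqref{eq:T-system-general} via $\ran(\bfTj(\eta))=\Span\{\eta\}^\perp$ and $\cproj\tp M\sym\eta\perp\eta$, and the specialization $\subs=\Span\{M\sym\eta\}^\perp$ when $\eta\tp M\sym\eta\neq 0$. All of your verifications (including the sign argument $\eta\tp M\sym\eta=\eta\tp f\le 0$ from (P) and the vanishing of the mixed terms in the special case) match the paper's reasoning.
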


\begin{remark}\label{rem:singularities}
    As we have mentioned in the introduction, singularities at $\eta = 0$ are drawbacks of the approaches presented in~\cite{ortega02-interconnection,wang03-generalized}.
    In the examples in \Cref{sec:examples}, the approach from \Cref{rem:P-A-definition-of-J-R} does not lead to these singularities.
    However, as we have seen in the discussion above, if $M \not\preceq 0$ and $P$ as in~\eqref{eq:P-pointwise} is needed to construct \portHamiltonian representations, then new singularities can occur.
\end{remark}

\begin{remark}\label{rem:manifold-turnpikes}
    For $f = M\eta = (J-R)\eta$ with $J=-J\tp$ and $R=R\tp$ we have $\eta\tp f = \eta\tp M\sym \eta = - \eta\tp R \eta$, and in particular $\MM \coloneq \{ z \in \RR^n ~|~ \eta\tp M\sym \eta = 0\} = \{z \in \RR^n ~|~ \eta\tp R \eta = 0\}$.
    Under suitable assumptions on the system dynamics, it can be shown that the set $\MM$ is a smooth submanifold of $\RR^n$, and that the trajectory $z^*$ minimizing the supplied energy $\int_{0}^{T} y\tp u \dt$ to~\eqref{eq:ph-a} spends most of its time close to $\MM$. 
    We refer the interested reader to~\cite{karsai24-manifold} for details.
\end{remark}

\section{Examples}\label{sec:examples}

Let us illustrate some of the constructions from \Cref{sec:characterizations,sec:constructions} using examples.
We begin with an important special case, where the nonlinearity stems from the gradient of the Hamiltonian.
Note that this example also covers linear \pH systems with Hamiltonian $\ham(z) = \tfrac12 z\tp Q z$, $Q = Q\tp \succ 0$.

\begin{example}[constant $J$ and $R$]\label{ex:nonlinear-eta}
    Consider a system of the form 
    \begin{equation}\label{eq:nonlinear-eta}
        \dot{z}
        =
        f(z)
        =
        (J - R)\eta(z)
        =
        K \eta(z),
    \end{equation}
    where $J = -J\tp$, $R = R\tp \succeq 0$ are possibly unknown.
    If we assume that \Cref{as:constructions} holds, then
    \begin{equation*}
        Df(z) D\eta(z)\inv
        =
        K D\eta(z) D\eta(z)\inv = K,
    \end{equation*}
    so that the construction from \Cref{rem:P-A-definition-of-J-R} recovers $J = K\skew$ and $R = - K\sym$.
\end{example}

For the second example, we consider a rigid body in three spatial dimensions spinning around its center of mass in the absence of gravity, see~\cite[Examples 4.2.4, 6.2.1]{vanderschaft17-l2gain}.

\begin{example}[spinning rigid body]\label{ex:rigid-body}
    Consider the system
    \begin{equation}\label{eq:spinning-rigid-body}
        \begin{aligned}
            \vec{\dot{z}_1 \\ \dot{z}_2 \\ \dot{z}_3}
            = 
            \vec{
            0 & - z_3 & z_2 \\
            z_3 & 0 & - z_1 \\
            -z_2 & z_1 & 0
            }
            \vec{
            \frac{z_1}{I_1} \\
            \frac{z_2}{I_2} \\
            \frac{z_3}{I_3}
            }
            =
            \vec{
            z_2 z_3 (\frac{1}{I_3} - \frac{1}{I_2}) \\
            z_1 z_3 (\frac{1}{I_1} - \frac{1}{I_3}) \\
            z_1 z_2 (\frac{1}{I_2} - \frac{1}{I_1})
            }
            = f(z),
        \end{aligned}
    \end{equation}
    where the state of the system is the vector of angular momenta $z = (z_1,z_2,z_3)$ in the three spatial dimensions, and the Hamiltonian of the system is given by $\ham(z) = \tfrac12 \big( \tfrac{z_1^2}{I_1} + \tfrac{z_2^2}{I_2} + \tfrac{z_3^2}{I_3} \big)$.
    Here, $I_1,I_2,I_3$ are the principal moments of inertia.
    Note that~\eqref{eq:spinning-rigid-body} is in the form~\eqref{eq:ph-a} with $J(z) = \Bigtextvec{0 & - z_3 & z_2 \\ z_3 & 0 & - z_1 \\ -z_2 & z_1 & 0}.$
    We obtain 
    \begin{align*}
        S(z) & 
        \coloneq Df(z) D\eta(z)\inv 
        =
        \vec{
        0 & - z_3 (1 - \frac{I_2}{I_3}) & z_2 ( 1 - \frac{I_3}{I_2}) \\
        z_3 (1 - \frac{I_1}{I_3}) & 0 & - z_1 ( 1 - \frac{I_3}{I_1}) \\
        - z_2 (1 - \frac{I_1}{I_2}) & z_1 ( 1 - \frac{I_2}{I_1}) & 0 \\
        }
    \end{align*}
    and 
    \begin{equation*}
        S(z) + S(z)\tp 
        =
        \vec{
        0 & z_3 (\frac{I_2 - I_1}{I_3}) & z_2 (\frac{I_1 - I_3}{I_2}) \\
        z_3 (\frac{I_2 - I_1}{I_3}) & 0 & z_1 (\frac{I_3 - I_2}{I_1}) \\
        z_2 (\frac{I_1 - I_3}{I_2}) & z_1 (\frac{I_3 - I_2}{I_1}) & 0
        },
    \end{equation*}
    which is in general indefinite since the upper $2\times2$ block has the structure $[\begin{smallmatrix} 0 & a \\ a & 0\end{smallmatrix}]$.
    In particular, this example shows that the sufficient condition in \Cref{thm:P-A} is not necessary.
    Integration of $S(sz)$ yields
    \begin{equation*}
        M(z) 
        = 
        \int_{0}^{1} S(sz) \ds 
        =
        \frac12 
        \vec{
        0 & - z_3 (1 - \frac{I_2}{I_3}) & z_2 ( 1 - \frac{I_3}{I_2}) \\
        z_3 (1 - \frac{I_1}{I_3}) & 0 & - z_1 ( 1 - \frac{I_3}{I_1}) \\
        - z_2 (1 - \frac{I_1}{I_2}) & z_1 ( 1 - \frac{I_2}{I_1}) & 0
        },
    \end{equation*}
    which is again indefinite.

    Since we know that the system is conservative, we can use the ideas from \Cref{subsec:conservative-systems} to find a matrix $P(z)$ such that $P(z) \eta(z) = 0$ and $P(z)\sym + M(z)\sym = 0$.
    The ansatz of tridiagonal $P(z)\skew$ leads to the system 
    \begin{equation*}
        T(z) p(z) 
        = 
        \vec{
            \frac{z_{2}}{I_{2}} & 0 \\
            - \frac{z_{1}}{I_{1}} & \frac{z_{3}}{I_{3}} \\
            0 & - \frac{z_{2}}{I_{2}}
        }
        \vec{p_1 \\ p_2}
        =
        M(z)\sym \eta(z)
    \end{equation*}
    from which we obtain the solution 
    \begin{equation*}
        P(z)\skew = 
        \frac14
        \vec{
            0 & \frac{z_{3} (I_{2} - I_{3})}{I_{3}} & 0 \\
            - \frac{z_{3} (I_{2} - I_{3})}{I_{3}} & 0 & \frac{z_{1} (I_{2} - I_{1} )}{I_{1}} \\
            0 & - \frac{z_{1} (I_{2} - I_{1})}{I_{1}} & 0
        }.
    \end{equation*}
    For this $P(z)\skew$, we obtain 
    \begin{equation*}
        M(z) + P(z) 
        =
        \frac14
        \vec{
            0 & \frac{z_{3} \left(I_{1} + 2 I_{2} - 3 I_{3}\right)}{I_{3}} & - \frac{z_{2} \left(I_{1} - 2 I_{2} + I_{3}\right)}{I_{2}}
            \\
            - \frac{z_{3} \left(I_{1} + 2 I_{2} - 3 I_{3}\right)}{I_{3}} & 0 & - \frac{z_{1} \left(3 I_{1} - 2 I_{2} - I_{3}\right)}{I_{1}}
            \\
            \frac{z_{2} \left(I_{1} - 2 I_{2} + I_{3}\right)}{I_{2}} & \frac{z_{1} \left(3 I_{1} - 2 I_{2} - I_{3}\right)}{I_{1}} & 0
        }.
    \end{equation*}
    By construction we have $(M(z) + P(z))\eta(z) = M(z) \eta(z)$ and $M(z)\sym + P(z)\sym = 0$.
    In particular, $M(z) + P(z)$ is a suitable choice for the \pH representation of~\eqref{eq:spinning-rigid-body} that differs from the usual choice for this example.
\end{example}
Although we have only considered finite dimensional systems so far, let us illustrate that the methods from \Cref{sec:characterizations,sec:constructions} may potentially be used for infinite dimensional systems as well. 
In the infinite dimensional setting, the transposes in the finite dimensional definitions of $M(z)\skew$ and $M(z)\sym$ are replaced by formal adjoints.
As we do not include a rigorous discussion about the domains of the operators, the following derivations should be understood on a formal level.
\begin{example}[quasilinear wave equation]\label{ex:damped-wave}
    \newcommand{\visc}[1]{#1} 
    As in~\cite{giesselmann24-energy}, let us consider
    \begin{equation*}
        \begin{aligned}
            \partial_t \rho & = - \partial_x v, \\
            \partial_t v & = - \partial_x p(\rho) - \gamma F(v) \visc{+ \nu \partial_x^2 v}
        \end{aligned}
    \end{equation*}
    on $\Omega = [0,\ell]$ together with the boundary conditions $p(\rho(\cdot,0)) \visc{- \nu \partial_x v(\cdot, 0)} = p_0$, $p(\rho(\cdot,\ell)) \visc{- \nu \partial_x v(\cdot, \ell)} = p_\ell$ and initial conditions $(\rho,v)(0,\cdot) = (\rho_0, v_0)$ in $\Omega$.
    Here, the term $\gamma F(v)$ with $\gamma \geq 0$ models friction forces, and we assume that $F\in C^1$ is odd with $F(v) \geq 0$ for $v \geq 0$.
    \visc{Similarly, the term $\nu \partial_x^2 v$ with $\nu \geq 0$ models viscous forces.}
    For $P(\rho)$ such that $P'(\rho) = p(\rho)$, the associated Hamiltonian reads as $\ham(\rho, v) = \int_{0}^{\ell} P(\rho) + \tfrac12 v^2 \dx$
    with
    \begin{equation*}
        \eta(\rho, v) = \ham'(\rho, v) = \vec{p(\rho) \\ v}.
    \end{equation*}
    Setting $z \coloneq (z_1, z_2) \coloneq (\rho, v)$, we obtain 
    \begin{equation*}
        \partial_t z 
        = 
        \vec{ \partial_t z_1 \\ \partial_t z_2}
        =
        \vec{ - \partial_x z_2 \\ - \partial_x p(z_1) - \gamma F(z_2) \visc{+ \nu \partial_x^2 z_2}}
        = 
        f(z).
    \end{equation*}
    In the following, we assume $p\colon \RR \to \RR$ with $p(0) = 0$ is strictly monotone, continuously differentiable and surjective, such that \Cref{as:constructions} is satisfied.
    The derivatives of $f$ and $\eta$ read as 
    \begin{align*}
        Df(z) 
        = 
        \vec{ 
            0 & - \partial_x \\ 
            - \partial_x \circ p'(z_1) & - \gamma F'(z_2) \visc{+ \nu \partial_x^2}
            },~~
        D\eta(z) 
        = 
        \vec{ 
            p'(z_1) & 0 \\ 
            0 & 1
            }
    \end{align*}
    and we obtain $Df(z) D\eta(z)\inv =  \tvec{ 0 & - \partial_x \\ - \partial_x & - \gamma F'(z_2) \visc{+ \nu \partial_x^2} }$.
    Similar to \Cref{rem:P-A-definition-of-J-R}, we now have $f(z) = M(z) \eta(z)$ with
    \begin{equation*}
        M(z) 
        = 
        \int_{0}^{1} Df(sz) D\eta(sz)\inv \ds 
        = 
        \vec{ 
            0 & - \partial_x \\ 
            - \partial_x & - \gamma \frac{F(z_2)}{z_2} \visc{+ \nu \partial_x^2}
            },
    \end{equation*}
    where we have used $\int_{0}^{1} F'(s z_2) \ds = \frac{F(z_2)}{z_2}$. 
    Note that we have the formal adjoint\visc{s} $(\partial_x)^*= -\partial_x$\visc{ and $(\partial_x^2)^* = \partial_x^2$}, so that $\partial_x + (\partial_x)^* = 0$\visc{ and $\partial_x^2 + (\partial_{x}^2)^* = 2 \partial_x^2$}.
    With $M(z)\skew = \tfrac12 (M(z) - M(z)^*)$ and $M(z)\sym = \tfrac12 (M(z) + M(z)^*)$ we now obtain 
    \begin{align*}
        J = M(z)\skew & = 
        \vec{ 
            0 & - \partial_x \\ 
            - \partial_x & 0
            },
        \\
        R(z) = -M(z)\sym & = 
        \vec{ 
            0 & 0 \\ 
            0 & \gamma \frac{F(z_2)}{z_2} \visc{- \nu \partial_x^2}
        }.
    \end{align*}
    We observe that $R(z)$ is formally semidefinite in the sense that 
    \begin{equation*}
        \int_{\Omega} \Big(\vec{v \\ w}, R(z) \vec{v \\ w}\Big) \dx 
        = 
        \gamma \int_{\Omega} w^2 \tfrac{F(z_2)}{z_2} \dx
        \visc{+ \nu \int_{\Omega} (\partial_{x} w)^2 \dx}
        \geq 0
    \end{equation*}
    for smooth, compactly supported functions $v$ and $w$, where $\frac{F(z_2)}{z_2} \geq 0$ because $F$ is odd.
    Further, we remark that $J$ and $R(z)$ as above coincide with the usual decomposition $f(z) = (J - R(z))\eta(z)$ for this example.
\end{example}

\section{Conclusion}\label{sec:conclusion}

In this paper, we have investigated the relationship between passivity encoding representations and passivity for nonlinear systems, offering methods to construct \portHamiltonian representations when both the system dynamics and the associated Hamiltonian are known.
Under the assumption of injectivity of~$\eta$, we demonstrated that every passive system can be expressed in the form~\eqref{eq:ph-b}.
For \pH systems of the form~\eqref{eq:ph-a}, we highlighted that the semidefiniteness of the dissipation matrix on the entire state space is an intrinsic property of the model class that does not necessarily follow from passivity. 
As a remedy, we provided conditions that ensure a representation in the form~\eqref{eq:ph-a} is feasible, leading to a systematic method of constructing port-Hamiltonian representations. 
We successfully applied our approach to multiple examples and observed that it is also feasible for some infinite dimensional systems.

As storage functions are not unique~\cite{willems72-dissipative1}, an open question is choosing a storage function $\ham$ such that an associated \portHamiltonian representation exists and is easy to obtain.
Other open topics are incorporating direct feedthrough terms and the rigorous treatment of the infinite dimensional case.

\section*{Acknowledgment}
We thank the reviewers for several helpful comments.
Further, T.\,Breiten and A.\,Karsai thank the Deutsche Forschungsgemeinschaft for their support within the subproject B03 in the Sonderforschungsbereich/Transregio 154 “Mathematical Modelling, Simulation and Optimization using the Example of Gas Networks” (Project 239904186).

\bibliographystyle{siam}
\bibliography{nlph}

\end{document}